\theoremstyle{plain}
\newtheorem{thm}{Theorem}[section]
\newtheorem{theorem}[thm]{Theorem}
\newtheorem{prop}[thm]{Proposition}
\theoremstyle{definition}
\newtheorem{remark}[thm]{Remark}
\numberwithin{equation}{section}
\newcommand{\ml}[2]{\begin{multline}\label{#1}#2 \end{multline}}
\newcommand{\ga}[2]{\begin{gather}\label{#1}#2 \end{gather}}
\newcommand{\surj}{\twoheadrightarrow}
\newcommand{\sD}{{\mathcal D}}
\newcommand{\sE}{{\mathcal E}}
\newcommand{\sH}{{\mathcal H}}
\newcommand{\sI}{{\mathcal I}}
\newcommand{\sK}{{\mathcal K}}
\newcommand{\sO}{{\mathcal O}}
\newcommand{\sS}{{\mathcal S}}
\newcommand{\sT}{{\mathcal T}}
\newcommand{\sX}{{\mathcal X}}
\newcommand{\sZ}{{\mathcal Z}}
\newcommand{\C}{{\mathbb C}}
\newcommand{\F}{{\mathbb F}}
\newcommand{\N}{{\mathbb N}}
\renewcommand{\P}{{\mathbb P}}
\newcommand{\Q}{{\mathbb Q}}
\newcommand{\Z}{{\mathbb Z}}
\title [Chern classes of crystals]{Chern classes of crystals}
\author{H\'el\`ene Esnault and Atsushi Shiho} 
\address{Freie Universit\"at Berlin, Arnimallee 3, 14195, Berlin,  Germany}
\email{esnault@math.fu-berlin.de}
\address{  Graduate School of Mathematical Sciences, 
the University of Tokyo, 3-8-1 Komaba, Meguro-ku, Tokyo 153-8914, Japan }
\email{ shiho@ms.u-tokyo.ac.jp}
\thanks{The first  author is supported by  the Einstein program. The second author is
 partly supported by JSPS 
Grant-in-Aid for Scientific Research (C) 25400008 and  
Grant-in-Aid for Scientific Research (A) 15H02048.}
\date{\today} 
\begin{document}

\begin{abstract}  The crystalline Chern classes of the value of a locally free crystal vanish
on a smooth variety defined over a perfect field. Out of this we conclude new cases of de Jong's conjecture relating the geometric \'etale fundamental group of a smooth projective variety defined over a perfect field and the triviality of its category of isocrystals. We also discuss the case of the Gau{\ss}-Manin convergent isocrystal. 
\end{abstract}
\maketitle

\section{Introduction}\label{intro}
On  a smooth algebraic variety  $X$  defined over the field $\C$ of complex numbers, a vector bundle $E$ endowed with an integrable connection $\nabla: E \to \Omega^1_{X}\otimes_{\sO_X} E$ has vanishing Chern classes $c^{dR}_i(E)$ in de Rham cohomology $H^{2i}_{dR}(X/\C)$.

\medskip

The standard way to see this is applying Chern-Weil theory: the successive traces of the iterates of the curvature  $\nabla^2\in {\rm Hom}(E, \Omega^2_{X} \otimes E)$ are identified with the Newton classes $N_i(E)$, and the $\Q$-vector spaces spanned by $c^{dR}_i(E), 1\le i\le n$  and the $N_i(E), 1\le i\le n$ are the same in $H^i_{dR}(X)$ \cite{Che52}.  In particular,  the method loses torsion information, and for example, a torsion class in  integral $\ell$-adic cohomology $H^2(X_{\bar k}, \Z_\ell(1))$ for some $\ell$ is  the first $\ell$-adic Chern class of some line bundle which carries an integrable connection.

\medskip

If $(X,E)$ is defined over a field $k$ of characteristic $0$ and $E$ admits an integrable connection after base changing to $\C$ for a complex embedding $k\hookrightarrow \C$, one still has vanishing $0=c^{dR}_i(E)\in H^{2i}_{dR}(X/k)$ for $i\ge 1$
because Chern classes in de Rham cohomology are functorial and de Rham cohomology satisfies base change property as ultimately coherent cohomology does. 

\medskip

The first purpose of this article is to present a similar vanishing statement where $k$ is now a perfect field of characteristic $p>0$, 
$E$ is replaced by the value $E_X$ on $X$ of a $p$-torsion free crystal $E$, and de Rham cohomology is replaced by crystalline cohomology.  Let $W=W(k)$ be the ring of Witt vectors on $k$.   We denote by $H^i_{\rm crys}(X/W)$ 
the integral crystalline cohomology of $X$ 
and by $c_i^{\rm crys}(E_X)$ 
the crystalline Chern classes of $E_X$ in $H^{2i}_{\rm crys}(X/W)$. 

\medskip
Recall (see  \cite[Section~1]{ES15} for an overview of the concepts)  that a crystal is a sheaf of $\sO_{X/W}$-modules of finite presentation on the crystalline site of $X/W$, such that the transition maps are isomorphisms, that crystals build a $W$-linear category ${\rm Crys}(X/W)$, of which the $\Q$-linearization ${\rm Crys}(X/W)\xrightarrow{\Q\otimes }{\rm Crys}(X/W)_{\Q}$ is the category of isocrystals. Then ${\rm Crys}(X/W)_{\Q}$ is a Tannakian category over $K$, the fraction field of $W$. Any $\sE \in {\rm Crys}(X/W)_{\Q}$ is of the shape $\Q\otimes E$ where $E$ is a lattice, that is a $p$-torsion free crystal.  Yet it is an open question  whether  one can choose $E$ to be locally free.  
\begin{thm} \label{thm1:main}
Let $X$ be a smooth variety defined over a perfect field $k$ of characteristic $p>0$. 
If $E\in {\rm Crys}(X/W)$ is a locally free crystal, 
then 
$c_i^{\rm crys}(E_X)=0$ for $i\ge 1$. 
\end{thm}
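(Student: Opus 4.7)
The plan is to reduce from general rank to line bundles via the splitting principle (projective bundle formula for crystalline cohomology), and then treat the line bundle case by a direct \v{C}ech--de Rham computation on a local lift.

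\textbf{Step 1 (local lifting).} Since crystalline Chern classes may be computed Zariski-locally, pick a smooth formal $W$-scheme $\tilde X$ lifting an affine open of $X$. Under the equivalence between locally free crystals on $X/W$ and locally free $p$-adically complete $\sO_{\tilde X}$-modules endowed with an integrable (automatically $p$-adically quasi-nilpotent) connection, $E$ corresponds to a pair $(\tilde E,\nabla)$, and the canonical isomorphism $H^{2i}_{\rm crys}(X/W)\cong H^{2i}_{dR}(\tilde X/W)$ identifies $c_i^{\rm crys}(E_X)$ with the de Rham Chern class $c_i^{dR}(\tilde E)$.

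\textbf{Step 2 (splitting principle).} Iterating the projective bundle construction $\P(E)\to X$ yields a flag bundle $\pi:\text{Fl}(E)\to X$. Each step is functorial for crystal data, so $\pi^*E$ carries a filtration by crystal subbundles whose successive quotients are crystal line bundles $L_1,\ldots,L_n$ on $\text{Fl}(E)$. Since $\pi^*$ is injective on crystalline cohomology and $\pi^*c_i^{\rm crys}(E_X)$ equals the $i$-th elementary symmetric polynomial in the $c_1^{\rm crys}(L_j)$ by the projective bundle formula, it suffices to prove $c_1^{\rm crys}(L)=0$ for every crystal line bundle $L$.

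\textbf{Step 3 (line bundles).} Applying Step~1 to such an $L$ gives a lift $(\tilde L,\nabla)$ on $\tilde X/W$ with integrable connection. Trivialize $\tilde L$ over a Zariski cover $\{U_\alpha\}$ with local generators $s_\alpha$, and write $\nabla s_\alpha=\omega_\alpha\otimes s_\alpha$ and $s_\alpha=g_{\alpha\beta}s_\beta$. Compatibility of $\nabla$ with the transitions gives $\omega_\alpha-\omega_\beta={\rm dlog}(g_{\alpha\beta})$, and integrability $\nabla^2=0$ gives $d\omega_\alpha=0$. In the \v{C}ech--de Rham bicomplex for $\Omega^\bullet_{\tilde X/W}$, the class $c_1^{dR}(\tilde L)$ is represented by the \v{C}ech $1$-cocycle $\{{\rm dlog}(g_{\alpha\beta})\}$ in $\Omega^1$; by the two identities above, this is exactly the total-complex coboundary of the $0$-cochain $\{\omega_\alpha\}$ in $\Omega^1$ (its vertical $d$-image vanishing by integrability). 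Hence $c_1^{dR}(\tilde L)=0$ integrally in $H^2_{dR}(\tilde X/W)$.

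\textbf{Main obstacle.} The Chern--Weil argument recalled in the introduction produces only rational vanishing and loses all torsion, so the only place where integrality genuinely enters is Step~3, where the crystal (equivalently, integrable-connection) structure provides an explicit \v{C}ech--de Rham primitive for $c_1$ that is defined over $W$ rather than only over $K$. The principal technical difficulty is globalization: local formal lifts of $X$ do not in general glue to a global lift, so the clean identification $H^{2i}_{\rm crys}(X/W)\cong H^{2i}_{dR}(\tilde X/W)$ must be replaced by descent along a smooth simplicial hypercovering $X_\bullet$ admitting lifts $\tilde X_\bullet$, and one must verify that the splitting construction of Step~2 and the \v{C}ech--de Rham primitive of Step~3 are performed compatibly with the simplicial gluing data supplied by the crystal structure.
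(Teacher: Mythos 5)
Your Step 2 contains a genuine gap that breaks the reduction. The tautological flag on $\P(E)$ (and on the full flag bundle) is \emph{not} preserved by the pulled-back connection $\pi^*\nabla$: the composite $\Omega^1_{\P(E)/X}(1)\hookrightarrow \pi^*E\xrightarrow{\pi^*\nabla}\Omega^1_{\P(E)}\otimes\pi^*E\to \Omega^1_{\P(E)}\otimes\sO(1)$ (the second fundamental form) is nonzero, so the successive quotients $L_j$ are merely line bundles, not crystal line bundles, and Step 3 does not apply to them. One can see this must be so without any computation: if $\sO_{\P(E)}(1)$ carried an integrable connection, its restriction to a fibre $\P^{r-1}$ would give $c_1^{dR}(\sO_{\P^{r-1}}(1))=0$ in $H^2_{dR}(\P^{r-1})$, which is false; equivalently, $c_1^{\rm crys}(\sO(1))=\xi$ is precisely the class that makes $H^*_{\rm crys}(\P(E))$ a free module over $H^*_{\rm crys}(X)$ in the projective bundle formula you invoke, so it cannot vanish. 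Thus the naive splitting principle fails for flat/crystalline bundles, and the elementary symmetric polynomials in the $c_1(L_j)$ cannot be killed term by term.

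The paper's first proof repairs exactly this point with a \emph{modified} splitting principle (following Esnault): one passes to a quotient differential graded algebra $\bar\Omega^\bullet_{D',\tau}\surj \pi^*A^\bullet$ of the (PD-)de Rham complex of the projective bundle, obtained by splitting off the relative differentials via the second fundamental form. In that quotient complex the induced $\tau$-connection \emph{does} stabilize $\sO(1)$, and the cohomology of the base still splits off (the composite \eqref{17} is the identity), so the vanishing of the $c_1(L_j)$ — and hence of $c_i^{\rm crys}(E_X)$ — holds in $H^*(D'_{(\bullet)},\pi^*\bar\Omega^\bullet_{D_{(\bullet)}})$, which suffices. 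Your Step 3 (the $d\log$/\v{C}ech argument for genuine rank-one crystals) is correct and matches Subsection 2.1 of the paper, and your globalization concern in Step 1 is real and is handled there by the simplicial PD-envelope $D_{(\bullet)}$; but as written the passage from rank $r$ to rank $1$ is not valid. (The paper also gives an independent second proof via the classifying space $BGL(r)$ and \v{C}ech--Alexander resolutions, which avoids projective bundles entirely.)
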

It is proved in \cite[Prop.~3.1]{ES15} that given an isocrystal $\Q\otimes E \in  {\rm Crys}(X/W)_{\Q}$, $c_i^{\rm crys}(E_X)$ does not depend on the choice of the lattice $E$. In particular, if this was true that any isocrystal carries a locally free lattice, then
Theorem~\ref{thm1:main} would imply that $c_i^{\rm crys}(E_X)=0$ for $i\ge 1$ for any $p$-torsion free crystal $E\in {\rm Crys}(X/W)$.

The proofs of Theorem \ref{thm1:main} imply also the following variant 
for Chern classes in torsion crystalline cohomology: Let $W_n := W/p^nW$. 
Then, if $X$ is as in Theorem \ref{thm1:main} and
if $E$ is a locally free crystal on $X/W_n$, 
then $c_i^{\rm crys}(E_X)$ is zero in the torsion crystalline cohomology group 
$H^{2i}_{\rm crys}(X/W_n)$ for $i\ge 1$. See Remarks \ref{rem:torsion1} and \ref{rem:torsion2}. 



\medskip

Recall that the Frobenius acts on ${\rm Crys}(X/W)$ and ${\rm Crys}(X/W)_{\Q}$. Locally ${\rm Crys}(X/W)$ is equivalent to the category of quasi-nilpotent integrable connections on a formal lift of $X$ over $W$, and the action is just given by the Frobenius pull-back of a connection. The category ${\rm Conv}(X/K)$ of convergent isocrystals is the largest full subcategory of ${\rm Crys}(X/W)_{\Q}$ which is stabilized by the Frobenius action.  It is proved in \cite[Prop.~3.1]{ES15} that $c_i^{\rm crys}(E_X)=0$ for any lattice $E$ of a convergent isocrystal, regardless of the existence of a locally free lattice.
   
\medskip

For a natural number $r$, set $N(r)$ to be the maximum of the lower common multiples of $a$ and $b$ for all choices $a, b\ge 1, a+b \le r$. 
As in \cite{ES15}, Theorem~\ref{thm1:main} enables one to prove the following case of de Jong's conjecture (\cite[Conj.~2.1]{ES15}).

\begin{thm}\label{thm:dejonglocfree}
Let $X$ be a smooth projective variety over a perfect field $k$ of characteristic $p>0$. 
If the \'etale fundamental group of $X \otimes_k \overline{k}$ is trivial 
and the maximal Mumford slope of the sheaf of $1$-forms is bounded above by $N(r)^{-1}$, then any isocrystal $\sE$ which is an iterated extension of 
irreducible isocrystals of rank $\le r$ having locally free lattices is isomorphic to $\sO_{X/K}^{\oplus {{\rm rank}( \sE)}}$. 
\end{thm}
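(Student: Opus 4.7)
The plan is to adapt the strategy used in \cite{ES15} for convergent isocrystals, where the vanishing of crystalline Chern classes of the underlying bundle played the key role. That input was provided there by \cite[Prop.~3.1]{ES15}; here, for an isocrystal $\sE$ admitting a locally free lattice $E$, the same input is now supplied by Theorem~\ref{thm1:main} applied to $E$.

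First I would reduce to the case of a single irreducible isocrystal $\sE$ of rank $\leq r$ with locally free lattice $E$, by induction on the length of the defining extension filtration. In the base case, Theorem~\ref{thm1:main} gives $c_i^{\rm crys}(E_X) = 0$ for all $i \geq 1$. The Mumford slope bound $\mu_{\max}(\Omega^1_X) \leq N(r)^{-1}$ then enables Langer's theorem on strong semistability: this precise bound is tailored so that slope-semistability of bundles of rank $\leq r$ is preserved under every iterate of Frobenius pullback. Combined with the Chern class vanishing, $E_X$ is strongly semistable with numerically trivial Chern classes, hence numerically flat. Under the hypothesis $\pi_1^{\rm et}(X \otimes_k \overline{k}) = 1$, such a bundle is trivial, and one lifts the trivialization to $\sE \cong \sO_{X/K}$. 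For the inductive step, once each irreducible subquotient is known to be isomorphic to $\sO_{X/K}$, the isocrystal $\sE$ becomes an iterated extension of the unit object in ${\rm Crys}(X/W)_{\Q}$, and one must show this extension splits: this reduces to a vanishing statement for Ext groups in the Tannakian category, ultimately for $H^1_{\rm crys}(X/K)$, which one expects to follow from the $\pi_1^{\rm et}$ hypothesis together with the positivity of the slope bound (forcing triviality of the Albanese and absence of global $1$-forms, hence of $H^1_{\rm dR}$ via the Hodge-to-de Rham spectral sequence, and then of $H^1_{\rm crys}$ via comparison).

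The main obstacle I expect is in the inductive step: converting the trivialization of each irreducible subquotient into a trivialization of the full extension, since it is not \emph{a priori} clear that the relevant Ext groups vanish without an additional cohomological input. Identifying precisely which Ext groups appear, and how the slope hypothesis constrains them via the Hodge-to-de Rham and crystalline-to-de Rham spectral sequences—in particular, handling the fact that $\Pic^0$ may be non-reduced in positive characteristic, so that triviality of the Albanese does not immediately give $h^1(\sO_X) = 0$—is where the argument is most delicate.
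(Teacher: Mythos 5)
Your reduction to the irreducible case and the Ext-vanishing for iterated extensions of the unit object is in the same spirit as the paper: the relevant group is $\Q\otimes H^1_{\rm crys}(X/W)=H^1_{\rm rig}(X/K)$, which vanishes because it is (up to isogeny) the Dieudonn\'e module of ${\rm Pic}^0_{\rm red}(X)$ — so the non-reducedness of ${\rm Pic}^0$ that worries you is harmless rationally (this is Theorem~\ref{thm:van} / \cite[Prop.~2.9]{ES15}). Likewise, the use of Theorem~\ref{thm1:main} plus the slope bound $N(r)^{-1}$ to obtain a lattice $E$ with $E_X$ \emph{strongly} $\mu$-stable of degree $0$ matches \cite[Prop.~4.2]{ES15}.

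The base case, however, has two genuine gaps. First, the assertion that a numerically flat bundle on a geometrically simply connected variety is trivial is not a theorem: it amounts to the triviality of the $S$-fundamental group scheme, which is strictly stronger than Gieseker's conjecture (proved by Esnault--Mehta for stratified bundles) and than de Jong's conjecture itself; if it were available, the crystal structure would play no role at all. The paper instead uses the crystal structure in an essential way: since all the Frobenius pullbacks $(F^a)^*E_X$ are strongly stable with vanishing Chern classes, they lie in a bounded moduli space, whence a periodicity $(F^a)^*E_X\cong E_X$, and only then does simple connectedness (via a Lange--Stuhler type argument, \cite[Cor.~3.8]{ES15}) trivialize $(F^a)^*E_N$ modulo $p^N$. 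Second, ``one lifts the trivialization to $\sE\cong\sO_{X/K}$'' is precisely the hard point and cannot be waved through: Remark~\ref{rmk:irred} of this very paper exhibits an \emph{irreducible} rank-$2$ isocrystal whose value $E_X$ is the trivial connection. The paper's lift proceeds by a deformation argument identifying the set of crystals mod $p^{n+m}$ trivialized mod $p^n$ with $M(s\times s, H^1_{\rm crys}(X/W_m))$; since the groups $H^1_{\rm crys}(X/W_m)$ need not vanish (torsion!), one needs the auxiliary fact that the restriction $H^1_{\rm crys}(X/W_n)\to H^1_{\rm crys}(X/k)$ is zero for $n\gg 0$ to propagate triviality through all levels, and finally the full faithfulness of $F^*$ on locally free crystals to remove the Frobenius twist $(F^a)^*$. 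None of this is visible in your plan, and the first gap in particular is not repairable without reintroducing the Frobenius/crystal input.
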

Given Theorem \ref{thm1:main}, the proof is nearly the same as the one of the main Theorem~\cite[Thm.1.1]{ES15}, with some differences which we explain in Section~\ref{s:thm1.1}. 

\medskip

The second main theorem is de Jong's conjecture for 
convergent isocrystals coming from geometry, which generalizes \cite[Thm. 2.14]{ES15}. 
Recall that, for a smooth proper morphism $f: Y \to X$ of varieties over a perfect field 
$k$ of characteristic $p>0$, the Gau{\ss}-Manin convergent isocrystal $R^if_* \sO_{Y/K}$ is 
defined by Ogus \cite{Ogu84}. 

\begin{thm} \label{thm2:main}
Let $f: Y\to X$ be a smooth proper morphism between smooth proper varieties over a perfect field $k$ of 
characteristic $p>0$.  If the \'etale fundamental group of $X \otimes_k \overline{k}$ is trivial, 
then the Gau{\ss}-Manin convergent isocrystal $R^if_* \sO_{Y/K}$ 
is isomorphic to $\sO_{X/K}^{\oplus r}$, where $r$ is its rank. 
\end{thm}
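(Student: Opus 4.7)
The plan is to follow the strategy of Theorem~\ref{thm:dejonglocfree} (and of \cite[Thm.~2.14]{ES15}), while using the $F$-isocrystal structure on the Gau{\ss}-Manin object to bypass the Mumford slope hypothesis on $\Omega^1_X$. By Ogus \cite{Ogu84}, $\sE := R^if_*\sO_{Y/K}$ is a convergent isocrystal, and the relative Frobenius of $Y/X$ promotes it to a convergent $F$-isocrystal. Hence \cite[Prop.~3.1]{ES15} yields $c_j^{\rm crys}(E_X)=0$ in $H^{2j}_{\rm crys}(X/W)$ for every lattice $E$ of $\sE$ and every $j\ge 1$, with no need to assume the existence of a locally free lattice.

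I would then split the argument into two steps, as in the proof of Theorem~\ref{thm:dejonglocfree}: (a) show that every irreducible constituent $\sF_\alpha$ of $\sE$ in ${\rm Crys}(X/W)_\Q$ is isomorphic to $\sO_{X/K}$; (b) show that an iterated extension of trivial objects of ${\rm Crys}(X/W)_\Q$ is itself trivial under our hypotheses. Step (b) proceeds exactly as in Theorem~\ref{thm:dejonglocfree}: the triviality of $\pi_1^{\rm \'et}(X\otimes_k \bar k)$ combined with Theorem~\ref{thm1:main} forces the relevant crystalline extension classes to vanish.

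Step (a) is where the $F$-structure does the work that the Mumford slope hypothesis was doing in Theorem~\ref{thm:dejonglocfree}. Each $\sF_\alpha$, as a subquotient of the convergent $F$-isocrystal $\sE$, is itself a convergent $F$-isocrystal with vanishing crystalline Chern classes. Katz's slope decomposition then forces $\sF_\alpha$ to be pure of a single slope $s_\alpha$. When $s_\alpha=0$, Crew's equivalence identifies $\sF_\alpha$ with a continuous $p$-adic representation of $\pi_1^{\rm \'et}(X\otimes_k \bar k)$, which is trivial by hypothesis, so $\sF_\alpha \cong \sO_{X/K}$. I expect the main obstacle to be the case $s_\alpha \neq 0$: here one has to exploit the Gau{\ss}-Manin origin, through Mazur's Hodge--Newton inequality and the Weil-number structure of the Frobenius eigenvalues, together with the $F$-compatibility of Chern classes and the Chern class vanishing above, to force $s_\alpha=0$ and thereby reduce to the unit-root case already handled.
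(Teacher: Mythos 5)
There is a genuine gap in step (a), and it is fatal to the route you chose. A first (fixable) issue is that a Jordan--H\"older constituent of $\sE$ in ${\rm Conv}(X/K)$ is not an $F$-isocrystal: Frobenius only permutes the constituents, so each one carries at best a structure for some power $(F^*)^d$. The real problem is your claim that one can force the slope $s_\alpha$ to be $0$. This is false: take $f$ to be the constant family $Y = Y_0\times_k X \to X$ with $Y_0$ a supersingular abelian variety (or any non-ordinary fibre). Then $R^1f_*\sO_{Y/K}$ is the constant convergent isocrystal with fibre $H^1_{\rm crys}(Y_0/W)\otimes K$, of pure slope $1/2$; it is trivial as a convergent isocrystal, exactly as Theorem~\ref{thm2:main} asserts, but it is nowhere unit-root, so no Hodge--Newton or Weil-number argument can reduce you to Crew's equivalence \cite{Cre87}. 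Note also that the theorem asserts triviality only in ${\rm Conv}(X/K)$, not as an $F$-isocrystal: the Frobenius structure of a ``trivial'' Gau{\ss}--Manin isocrystal is a constant matrix whose slopes are arbitrary. Finally, your sketch never explains how the vanishing of Chern classes alone trivializes an irreducible constituent; in Theorem~\ref{thm:dejonglocfree} that step rests on the hypothesis $\mu_{\max}(\Omega^1_X)\le N(r)^{-1}$ (via strong $\mu$-stability and \cite[Cor.~3.8]{ES15}), and the $F$-structure is not a substitute for it.

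The paper's proof is entirely different and uses no Chern classes for this theorem. For $k$ finite one evaluates $R^if_*\sO_{Y/K}$ at closed points $y$: by \cite{KM74} and \cite{CLS98} the Frobenius eigenvalues on $H^i_{\rm rig}(Y\times_Xy/K)$ agree with those on $H^i(Y\times_X\bar y,\Q_\ell)$, and geometric simple connectedness makes the latter depend only on ${\rm deg}(y/k)$. Abe's \v{C}ebotarev density theorem \cite[Prop.A.3.1]{Abe13} then identifies the semisimplification of the overconvergent lift $R^if_*\sO^\dagger_{Y/K}$ with a constant $F$-isocrystal, and Theorem~\ref{thm:van} (i.e.\ $H^1_{\rm rig}(X/K)=0$) kills the extensions. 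The case of a general perfect field follows by spreading out to a finitely generated base, specializing to a closed point with finite residue field, and a base-change theorem for crystalline cohomology of isocrystals. If you want to salvage your outline, this pointwise comparison with $\ell$-adic cohomology is the ingredient that must replace your slope analysis.
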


\begin{remark} \label{rem:thm2}
For a smooth projective morphism  $f: Y\to X$ between varieties defined over a perfect field $k$ of 
characteristic $p > 0$, Lazda \cite[Cor.~5.4]{Laz15} proved recently that the Gau{\ss}-Manin convergent isocrystal 
$R^if_* \sO_{Y/K}$ canonically lifts to an overconvergent isocrystal on $X$, 
which we denote by $R^if_* \sO_{Y/K}^{\dagger}$ and call the Gau{\ss}-Manin overconvergent isocrystal. 
When $X$ is smooth, geometrically simply connected, $k$ is a finite field and $p \ge 3$, we prove that 
$R^if_* \sO_{Y/K}^{\dagger}$ is trivial as overconvergent 
isocrystal on $X$.  See Section \ref{s:gm}. 
\end{remark}

 We now explain the methods used in order to prove Theorem \ref{thm1:main}.
 
 \medskip
 
There are two ways to prove the vanishing of $c_i^{dR}(E) \in H^{2i}_{dR}(X/k) \, (i>0)$ 
for a locally free sheaf $E$ equipped with an integrable connection 
which does not use Chern-Weil theory. 

\medskip

One method uses a {\it modified splitting principle} as developed in  \cite{Esn88} and \cite{Esn00}.  On the projective bundle $\P(E)\xrightarrow{\pi} X$, the integrable connection induces a  differential graded algebra $\Omega^\bullet_\tau$, which is a quotient  $\Omega^\bullet_{\P(E)} \to  \Omega^\bullet_\tau$
 of the de Rham complex, and which cohomologically splits $H^i_{dR}(X/k)$ in $H^i(\P(E), \Omega^\bullet_\tau)$. 
Then one shows that the $\Omega^1_\tau$-connection on $\pi^*E$ induced by $\nabla$ stabilizes $\sO_{\P(E)}(1)$. 
Hence, when the rank of $E$ is two, $\pi^*E$ is an extension of $\Omega^1_\tau$-connections of rank $1$ and 
so one can prove the vanishing of the classes in $H^{2i}(\P(E), \Omega^\bullet_\tau)$, thus in $H^{2i}_{dR}(X/k)$. 
In the case of rank $r$, one repeats the above argument $(r-1)$-times to obtain a filtration by 
$\Omega^1_\tau$-connections of rank $1$. 

We adapt this construction to the crystalline case as follows. 
(In the introduction, we assume the existence of a closed embedding $X \hookrightarrow P$ of $X$ into 
a smooth $p$-adic formal scheme $P$ over $W$ to ease the explanation.)  
One considers the projective bundle $\pi: \P(E_D)\to D$ of the value $E_D$  of a locally free crystal $E$ on the PD-hull 
$X\hookrightarrow D\to \P_W$ of the embedding $X \hookrightarrow P$.  One shows that  the connection on $E_D$ induces a quotient differential graded algebra $\bar  \Omega^\bullet_{\P(E_D)} \to
 \bar  \Omega^\bullet_\tau$ of the PD-de Rham complex $\bar  \Omega^\bullet_{\P(E_D)}$. 
Then one shows that the $\bar \Omega^1_\tau$-connection on $\pi^*(E_D)$ respects $\sO_{\P(E_D)}(1)$. 
Thus we can argue as in the de Rham case and obtain the required vanishing. 
See Section~\ref{s:spl}.

\medskip

Another way on the de Rham side is to say that local trivializations of $E$ yield a simplicial scheme $X_\bullet$ augmenting to $X$
together with a morphism $e: X_\bullet  \to BGL(r)$, defined by the transition functions, to the simplicial classifying scheme $BGL(r)$, where $r$ is the rank of $E$. This induces the maps $H^{2i}(BGL(r), \Omega^{\bullet}) \to H^{2i}_{dR}(X_{\bullet}/k) 
\cong H^{2i}_{dR}(X/k)$. If $k = \C$ and $E$ carries an integrable connection, we have a similar map  
$H^{2i}(BGL(r), \Omega^{\bullet}) \to H^{2i}_{dR}(X_{{\rm an}, \bullet}/k) 
\overset{\cong}{\leftarrow} H^{2i}_{dR}(X_{\rm an}/k)$ which is identified with the previous one and factors through 
the cohomology $H^{2i}(BGL(r)_{\rm disc}, \Omega^\bullet)=H^{2i}(BGL(r)_{\rm disc}, \sO)$ of 
the discrete classifying simplicial space $BGL(r)_{\rm disc}$. 
Thus $c_i^{dR}(E)$ is in the image of the composite map
 $H^{2i}(BGL(r), \Omega^{\ge i}) \to H^{2i}(BGL(r), \Omega^\bullet) \to H^{2i}(BGL(r), \sO) \to 
H^{2i}(BGL(r)_{\rm disc}, \sO)$, which is zero for $i\ge 1$. 

We adapt this construction to the crystalline case as follows. 
Given a closed emdedding $X\subset P$ into 
a smooth $p$-adic formal scheme over $W$, one defines $D_\bullet$ to be the simplicial scheme such that $D_n$ is the PD-hull of the diagonal in $P^{\times n+1}$.
Then the crystalline Poincar\'e lemma and the \v{C}ech-Alexander resolution 
equate $H^i_{\rm crys}(X/W)$ both with $H^i(D_\bullet, \bar \Omega^\bullet)$ and with $H^i(D_\bullet, \sO)$ (see Proposition~\ref{prop:OmegaO}).  Thus,  defining a certain simplicial version $\sD_\bullet$ of $D_\bullet$, to which $X_\bullet$ maps, 
$E$ has the free value $E_{\sD_\bullet}$ and this induces the map 
$H^{2i}_{\rm crys}(BGL(r)/W) = H^{2i}(BGL(r), \Omega^{\bullet}) \to H^{2i}(\sD_\bullet, \bar \Omega_{\sD_\bullet}) \overset{\cong}{\to} H^{2n}(\sD_\bullet, \sO) \cong H^{2i}_{\rm crys}(X/W)$. 
Thus $c_i^{\rm crys}(E_X)$ is in the image of the composite map
 $H^{2i}(BGL(r), \Omega^{\ge i}) \to H^{2i}(BGL(r), \Omega^{\bullet}) \to H^{2i}(BGL(r), \sO) \to 
H^{2i}(\sD_{\bullet}, \sO) \cong H^{2i}_{\rm crys}(X/W)$, which is zero for $i\ge 1$. 
See Section~\ref{s:bg}.

 
 \medskip
 
 We now explain the methods used in order to prove Theorem \ref{thm2:main}, assuming $k$ is a finite field.
 
  Let us assume first  that $f: Y \to X$ is an abelian scheme and present then an $\ell$-adic argument due to G. Faltings. The arithmetic fundamental group of $X$ acts on $R^if_*\Q_\ell$ via 
 ${\rm Gal}( \bar \F_q/\F_q)$, thus by Tate's theorem \cite{Tat66}, all fibers of $f$ over $\bar \F_p$-points of $X$ are isogeneous, thus the Gau{\ss}-Manin convergent isocrystal $R^if_*\sO_{Y/K}$ is trivial. 
 
 In general, one has to replace Tate's motivic theorem by a result of Chiarellotto-Le Stum
\cite{CLS98} (generalizing 
Katz-Messing theorem \cite{KM74}) and Abe's 
 \v{C}ebotarev density theorem \cite[Prop.A.3.1]{Abe13}, which yields the triviality of the semi-simplification of 
 $R^if_*\sO_{Y/K}^{\dagger}$ (see Remark \ref{rem:thm2} for notation) 
in the category ${\rm Conv}^{\dagger}(X/K)$ of overconvergent isocrystals on $X$. 
Finally one has to go from the semi-simplifcation to the original Gau{\ss}-Manin isocrystal by 
showing that there are no extensions of the trivial overconvergent isocrystal by itself on $X$ 
when $X$ is proper or $p \geq 3$ (Theorem~\ref{thm:van}). 
 
Over a non-finite field (when $X$ is proper), one reduces the proof to the case of a finite ground field by a specialization argument. 
See Section~\ref{s:gm}.

\medskip

Finally, in Section~\ref{s:lefschetz} we prove a very weak form of a Lefschetz theorem for isocrystals. 
 
 \medskip
 {\it Acknowledgements:} It is a pleasure to thank Gerd Faltings for his interest in our work and for a discussion in May 2015 in Beijing, which helped us proving Theorem~\ref{thm2:main}.   In addition, we thank  Bhargav Bhatt, Johan de Jong, Peter Scholze for discussions on isocrystals, notably on the question of whether they always carry a locally free lattice.  Unfortunately, we still do not have a general   answer to this basic question. Nikita Markaryan kindly sent us his non-posted notes \cite{Mar11} which aim at constructing a crystalline Chern-Weil theory, unfortunately the notes stop before this topic is addressed. 
 
\section{Crystalline modified splitting principle} \label{s:spl}

The aim of this section is to prove Theorem~\ref{thm1:main} using a crystalline modified splitting principle.

\medskip

Let $X$ be a smooth variety over a perfect field $k$ of characteristic $p>0$ and 
let $X_{(\bullet)} \to X$ be a simplicial scheme augmented to $X$ defined 
as the \v{C}ech hypercovering associated to an open covering $X= \bigcup_{i \in I}X_i$ 
which admits a closed embedding $X_{(\bullet)} \to D_{(\bullet)}$ into 
a simplicial $p$-adic formal scheme $D_{(\bullet)}$ over $W$
such that, for any $n \in \N$ and Zariski locally on $X_{(n)}$, 
$X_{(n)} \to D_{(n)}$ is the PD-envelope of a closed immersion $X_{(n)} \to Y$ of $X_{(n)}$ into 
a smooth $p$-adic formal scheme $Y$ over $W$. 
(Note that, for any smooth variety $X$ over $k$, there exists such a system.) 

\subsection{Generalities on the first crystalline Chern class, and variants of it} \label{ss:first}
 One denotes by $(X/W)_{\rm crys}, (X_{(\bullet)}/W)_{\rm crys}$ 
the crystalline topos of $X/W$, $X_{(\bullet)}/W$ respectively 
 (see \cite[p.5.3]{BO78}), by $(X_{(\bullet)}/W)_{\rm crys}|_{D_{(\bullet)}}$ its restriction to $D_{(\bullet)}$ (\cite[p.5.22]{BO78}), by $(-)_{\rm Zar}$ the Zariski topos.  In particular the canonical morphism of topo\"i $X_{{(\bullet)}\rm Zar}\to D_{{(\bullet)}\rm Zar}$ 
is an equivalence. 
On $D_{{(\bullet)}\rm Zar}$ one defines the  PD-de Rham complex  $\bar \Omega^\bullet_{D_{(\bullet)}}$, 
which is a quotient differential graded algebra of the de Rham complex $\Omega^\bullet_{D_{(\bullet)}}$ of $D_{(\bullet)}$.  
The submodule $\sK\subset \Omega^1_{D_{(\bullet)}}$,  topologically spanned by $da^{[m]}-a^{[m-1]}da \, 
(a \in I={\rm Ker}(\sO_{D_{(n)}}\to \iota_* \sO_{X_{(n)}}), n,m \in \N)$ spans all relations, that is 
\ga{1}{ \bar \Omega^\bullet_{D_{(\bullet)}} =\Omega^\bullet_{D_{(\bullet)}}/\sK\wedge \Omega^{\bullet -1}_{D_{(\bullet)}}}
and in addition, each $\bar \Omega^i_{D_{(n)}}$ is locally free over $\sO_{D_{(n)}}$ with the relation
\ga{2}{\bar \Omega^i_{D_{(\bullet)}}=\wedge^i_{\sO_{D_{(\bullet)}}} \bar \Omega^1_{D_{(\bullet)}}}
(\cite[Prop. ~3.1.6]{Ill79}). 

One has the following commutative diagram of topo\"i \cite[p.6.12]{BO78}
\ga{1}{   \xymatrix{ \ar[d]_j (X_{(\bullet)}/W)_{\rm crys}|_{D_{(\bullet)}} \ar[r]^{\varphi} & D_{{(\bullet)}\rm Zar}=X_{{(\bullet)}\rm Zar}. \\
(X_{(\bullet)}/W)_{\rm crys}\ar[ur]_u
}
}
The complex
\ga{3}{L(\bar \Omega^\bullet_{D_{(\bullet)}}):= j_*\varphi^* (\bar \Omega^\bullet_{D_{(\bullet)}})}
in $(X_{(\bullet)}/W)_{\rm crys}$ is defined in \cite[p.6.13]{BO78}, and it is proved in \cite[Thm.6.12]{BO78} that the natural map 
\ga{4}{ \sO_{X_{(\bullet)}/W}\to L(\bar \Omega^\bullet_{D_{(\bullet)}})}
is a quasi-isomorphism in $(X_{(\bullet)}/W)_{\rm crys}$. As a consequence, restricting \eqref{4} to the crystalline units, one obtains a quasi-isomorphism 
in $X_{{(\bullet)}\rm Zar}$
\ga{5}{Ru_* \sO^\times_{X_{(\bullet)}/W} \xrightarrow{\cong} (\sO^\times_{D_{(\bullet)}} \xrightarrow{d \log} \bar \Omega^1_{D_{(\bullet)}}\xrightarrow{d}  \bar \Omega^2_{D_{(\bullet)}}\to \ldots).}
The exact sequence 
\ga{6}{ 0\to \sI_{X_{(\bullet)}/W}\to \sO_{X_{(\bullet)}/W}\to \iota_* \sO_{X_{(\bullet)}}\to 0} in $(X_{(\bullet)}/W)_{\rm crys}$, defining $\sI_{X_{(\bullet)}/W}$, 
 yields an exact sequence
\ga{7}{ 1\to (1+ \sI_{X_{(\bullet)}/W}) \to \sO^\times _{X_{(\bullet)}/W}\to \iota_* \sO^\times _{X_{(\bullet)}}\to 1
} in $(X_{(\bullet)}/W)_{\rm crys} $.
By \cite[2.1]{BI70} and the functoriality of the construction there, 
the connecting homomorphism $$H^1(X, \sO^\times_X)\to H^1(X_{(\bullet)}, \sO^\times_{X_{(\bullet)}}) \to 
H^2((X_{(\bullet)}/W)_{\rm crys}, 1+\sI_{X_{(\bullet)}/W}),$$ followed by the logarithm $$H^2((X_{(\bullet)}/W)_{\rm crys}, 1+\sI_{X_{(\bullet)}/W}) \to H^2((X_{(\bullet)}/W)_{\rm crys}, \sI_{X_{(\bullet)}/W})$$ 
and the natural map $$H^2((X_{(\bullet)}/W)_{\rm crys},  \sI_{X_{(\bullet)}/W})\to H^2_{\rm crys}(X_{(\bullet)}/W) \cong 
H^2_{\rm crys}(X/W) $$  precisely computes $c_1^{\rm crys}$. 
Thus applying $Ru_*$ to \eqref{7} and using \eqref{5},  we conclude that the connecting homomorphism 
$H^1(X, \sO^\times_X)\to H^1(X_{(\bullet)},\sO^\times_{X_{(\bullet)}})\to 
 H^2(D_{(\bullet)}, (1+I_{(\bullet)})\xrightarrow{d \log} \bar  \Omega^{\ge 1}_{D_{(\bullet)}} ))$ of the exact sequence  in 
$D_{{(\bullet)}\rm Zar}$
\ga{8}{1\to ((1+I_{(\bullet)})\xrightarrow {d \log}  \bar \Omega^{\ge 1}_{D_{(\bullet)}} ) \to ( \sO^\times_{D_{(\bullet)}} \xrightarrow {d \log} \bar  \Omega^{\ge 1}_{D_{(\bullet)}}) \to \sO^\times_X\to 1, }
followed by the logarithm $H^2(D_{(\bullet)}, (1+ I_{(\bullet)}) \xrightarrow {d \log}  \bar \Omega^{\ge 1}_{D_{(\bullet)}}) \to 
H^2(D_{(\bullet)}, I \xrightarrow{d}  \bar \Omega^{\ge 1}_{D_{(\bullet)}})$  and the natural map $H^2(D_{(\bullet)},  I \xrightarrow{d} \bar  \Omega^{\ge 1}_{D_{(\bullet)}} )\to H^2(D_{(\bullet)}, \bar \Omega^\bullet_{D_{(\bullet)}})=H^2_{\rm crys}(X_{(\bullet)}/W) = 
H^2_{\rm crys}(X/W)$ precisely computes $c_1^{\rm crys}$.  In particular, if $E_X$ is a locally free sheaf of rank $1$ on $X$, then by \eqref{5}, $c_1^{\rm crys}(E_X)=0$ if $E_X$ is the value on $X$ of a locally free crystal of rank $1$ on $X$. 

\medskip

More generally, let $ \tau: \bar \Omega^\bullet_{D_{(\bullet)}}\surj A^\bullet_{(\bullet)}$ is be a surjection of sheaves of differential graded algebras on $D_{{(\bullet)}\rm Zar}$, such that, for any $n \in \N$, $A^1_{(n)}$ is locally free over $\sO_{D_{(n)}}$, 
$A^0_{(n)}=\sO_{D_{(n)}}$,  $A^m_{(n)}=\wedge^m_{\sO_{D_{(n)}}} A^1_{(n)}$. A  $\tau$-{\it connection} (see \cite[(2.1)]{Esn88}) on a locally free sheaf $E_{D_{(\bullet)}}$  on $D_{(\bullet)}$ is an additive map $\nabla_\tau: E_{D_{(\bullet)}}\to A^1_{(\bullet)}
\otimes_{\sO_{D_{(\bullet)}}}  E_{D_{(\bullet)}}$ which fulfills the $\tau$-{\it Leibniz} rule $\nabla_{\tau}(\lambda  e)=\tau d(\lambda) \otimes e+ \lambda \otimes \nabla_\tau(e)$. Then $\nabla_\tau \circ \nabla_\tau: E_{D_{(\bullet)}}\to A^2_{(\bullet)} \otimes E_{D_{(\bullet)}}$, where $\nabla_\tau( \omega\otimes e)= (-1)^i \tau d(\omega) + \omega\otimes \nabla_\tau (e )$ for $\omega\in A^i_{(n)}$,  is $\sO_{D_{(\bullet)}}$-linear. The
$\tau$-connection is {\it integrable} if $\nabla_\tau \circ \nabla_\tau=0$.
One pushes down \eqref{8} along $\tau$ and obtains the exact sequence  in $D_{{(\bullet)}\rm Zar}$
\ga{9}{ 1\to ((1+I_{(\bullet)})\xrightarrow {d \log}  A^{\ge 1}_{(\bullet)} ) \to ( \sO^\times_{D_{(\bullet)}} \xrightarrow {d \log} 
A^{\ge 1}_{(\bullet)}) \to \sO^\times_{X_{(\bullet)}}\to 1.}
Then, for a locally free sheaf $E_{X}$ of rank $1$ on $X$,  $0=\tau(c_1^{\rm crys}(E_X)) \in H^2(D_{(\bullet)}, A^\bullet_{(\bullet)})$ if $E_X|_{X_{(\bullet)}}$ is the restriction to $X_{(\bullet)}$ of a line bundle  $E_{D_{(\bullet)}}$ on $D_{(\bullet)}$ which 
is endowed with an integrable $\tau$-connection. 

\subsection{Modified splitting principle} \label{ss:mod_spl_pr}
Let  $ \tau: \bar \Omega^\bullet_{D_{(\bullet)}}\surj A^\bullet_{(\bullet)}$ be a surjection of sheaves of differential graded algebras on $D_{{(\bullet)}\rm Zar}$ 
such that, for any $n \in \N$, $A^1_{(n)}$ is locally free over $\sO_{D_{(n)}}$, $A^0_{(n)}=\sO_{D_{(n)}}$ and 
$A^m_{(n)}=\wedge^m_{\sO_{D_{(n)}}} A^1_{(n)}$. Also, let 
$E_X$ be a locally free sheaf on $X$ such that its restriction $E_X|_{X_{(\bullet)}}$ to $X_{(\bullet)}$ extends to 
a locally free sheaf $E_{D_{(\bullet)}}$ on $D_{(\bullet)}$ endowed with an integrable $\tau$-connection. 
One defines $X'=\P(E_X), X'_{(\bullet)} = \P(E_X|_{X_{(\bullet)}}), \ D'_{(\bullet)}=\P(E_{D_{(\bullet)}})$, together with 
the augumentation $X'_{(\bullet)} \to X'$ and 
the closed embedding $\iota':  X'_{(\bullet)}\to D'_{(\bullet)}$. One has a cartesian squares of (simplicial formal) schemes (over $W$)
\ga{10}{\xymatrix{ X' \ar[d]_\pi & \ar[l] \ar[d]_\pi X'_{(\bullet)} \ar[r]^{\iota'} & D'_{(\bullet)}  \ar[d]_{\pi}\\
X & \ar[l] X_{(\bullet)} \ar[r]_{\iota} & D_{(\bullet)}. 
}}
The PD-structure on $I_{(\bullet)}$ extends uniquely to a PD-structure on $I'_{(\bullet)}={\rm Ker}(\sO_{D'_{(\bullet)}}\to \sO_{X'_{(\bullet)}})$ as $\pi$ is flat (\cite[Prop. ~3.21]{BO78}), and $\iota'$ is again the PD-envelope of a closed immersion from 
$X'_{(n)}$ to a smooth $p$-adic formal scheme over $W$ 
Zariski locally on $X_{(n)}$ for each $n$. Thus one can define $\bar \Omega^\bullet_{D'_{(\bullet)}}$ and one has an exact sequence
\ga{11}{0\to \pi^* \bar \Omega^1_{D_{(\bullet)}}\to \bar \Omega^1_{D'_{(\bullet)}}\to \Omega^1_{D'_{(\bullet)}/D_{(\bullet)}}\to 0}
on $X'_{{(\bullet)}\rm Zar}$. 
Setting $\sK_\tau={\rm Ker}(\bar \Omega^1_{D_{(\bullet)}}\to A^1_{(\bullet)})$, one defines $\bar \Omega^1_{D'_{(\bullet)},\tau}=\bar  \Omega^1_{D'_{(\bullet)}}/\pi^*\sK_\tau$. By definition, \eqref{11} pushs down to an exact sequence
\ga{12}{0\to \pi^* A^1_{(\bullet)}\xrightarrow{i} \bar \Omega^1_{D'_{(\bullet)},\tau} \xrightarrow{p}\Omega^1_{D'_{(\bullet)}/D_{(\bullet)}}\to 0}
on $X'_{{(\bullet)}\rm Zar}$.  One defines $\bar \Omega^n_{D'_{(\bullet)},\tau}=\wedge^n_{\sO_{D'_{(\bullet)}}} \bar \Omega^1_{D'_{(\bullet)},\tau}$. 
By \cite[Claim~p.~332]{Esn88}, the quotient homomorphism $r: \bar \Omega^1_{D'_{(\bullet)}}\to \bar \Omega^1_{D'_{(\bullet)}, \tau}$ extends to a quotient
\ga{13}{r: \bar \Omega^\bullet_{D'_{(\bullet)}}\to \bar \Omega^\bullet_{D'_{(\bullet)},\tau}}
of differential graded algebras on $X'_{{(\bullet)}\rm Zar}$, where the differential on $ \bar \Omega^\bullet_{D'_{(\bullet)},\tau}$  is denoted by $rd$. 

\medskip

The connection $\nabla_\tau$ on $E_{D_{(\bullet)}}$ induces a pull-back connection $\pi^*\nabla_\tau: \pi^*E_{D_{(\bullet)}}\to \bar \Omega^1_{D'_{(\bullet)},\tau} \otimes_{\sO_{D'_{(\bullet)}} }\pi^*E_{D_{(\bullet)}}$. 
Its restriction to $\Omega^1_{D'_{(\bullet)}/D_{(\bullet)}}(1)$ via the exact sequence
\ga{14}{ 0\to \Omega^1_{D'_{(\bullet)}/D_{(\bullet)}}(1) \to \pi^*E_{D_{(\bullet)}}\to \sO_{D'_{(\bullet)}}(1)\to 0,}
followed by the projection $\bar \Omega^1_{D'_{(\bullet)},\tau} \otimes_{\sO_{D'_{(\bullet)}} }\pi^*E_{D_{(\bullet)}} \to 
\bar \Omega^1_{D'_{(\bullet)},\tau} \otimes_{\sO_{D'_{(\bullet)}}} \otimes \sO_{D'_{(\bullet)}}(1)$
defines a section $$\sigma: \Omega^1_{D'_{(\bullet)}/D_{(\bullet)}} \to \bar \Omega^1_{D'_{(\bullet)},\tau}$$ of $-p$ (\cite[(2.4)]{Esn88}).  Thus $\tau'=1+p\circ \sigma : \bar \Omega^1_{D'_{(\bullet)},\tau}\to \pi^* A^1_{(\bullet)}$ is a section of $i$. By \cite[(2.5)]{Esn88}, $\tau'$ induces a surjective homomorphism
\ga{15}{\tau': \bar \Omega^\bullet_{D'_{(\bullet)}, \tau} \surj \pi^*A^\bullet_{(\bullet)}}
of differential graded algebras, where the differential on the right is induced by $\tau' \circ rd$. 

\medskip

The main point is then that the $ \pi^*A^1_{(\bullet)}$-valued connection $\nabla': \pi^*E_{D_{(\bullet)}} \to \pi^*A^1_{(\bullet)}\otimes_{\sO_{D'_{(\bullet)}}} \pi^*E_{D_{(\bullet)}}$, which is $\pi^*\nabla_\tau$ followed by $\tau'$ is integrable and respects the flag \eqref{14}, thus induces integrable connections 
\ga{15}{ \nabla': \sO_{D'_{(\bullet)}}(1)\to \pi^*A^1_{(\bullet)}\otimes \sO_{D'_{(\bullet)}}(1), \ \nabla': \Omega^1_{D'_{(\bullet)}/D_{(\bullet)}}(1) \to \pi^*A^1_{(\bullet)}\otimes \Omega^1_{D'_{(\bullet)}/D_{(\bullet)}}(1).}
Consequently we can iterate the construction, replacing $X$ by $X'$, 
$X_{(\bullet)}$ by $X'_{(\bullet)}$, 
$D_{(\bullet)}$ by $D'_{(\bullet)}$, $E_X$ by the descent of 
$\Omega^1_{D'_{(\bullet)}/D_{(\bullet)}}(1)|_{X'_{(\bullet)}}$ to $X'$, $E_{D_{(\bullet)}}$ by  $\Omega^1_{D'_{(\bullet)}/D_{(\bullet)}}(1)$, $\tau$ by $\tau' \circ r$. 

\subsection{Proof of Theorem~\ref{thm1:main}}

Further iterating, after $(r-1)$-steps, one obtains a diagram as \eqref{10} where now $X'$ is the complete flag bundle over $X$, with \eqref{15} becoming a surjective homomorphism 
\ga{16}{ \tau:  \bar  \Omega^\bullet_{D'_{(\bullet)}}\to \pi^*\bar \Omega^\bullet_{D_{(\bullet)}}}
of differential graded algebras, 
 and with a filtration on $\pi^*E_{X}$ with graded being a sum of locally free sheaves $L_j$ of rank $1$ such that 
the restriction $L_j|_{X'_{(\bullet)}}$ of $L_j$ to $X'_{(\bullet)}$ extends to a locally free sheaf on 
$D'_{(\bullet)}$ endowed with an integrable $\pi^*\bar \Omega^1_{D_{(\bullet)}}$-connection. 
In addition, from \cite[Lemma~in~(1.3), (2.7)]{Esn88}, the composite
\ga{17}{ \bar \Omega^\bullet_{D_{(\bullet)}} \to R\pi_* \bar \Omega^\bullet_{D'_{(\bullet)}}\xrightarrow{R\pi_*\tau} R\pi_* 
\pi^*  \bar \Omega^\bullet_{D_{(\bullet)}} \xleftarrow{\cong} \bar \Omega^\bullet_{D_{(\bullet)}}}
is the identity on $X_{{(\bullet)}\rm Zar}$. 
By the standard  Whitney product formula for crystalline Chern classes \cite[III Thm. ~1.1.1]{Gro85} 
for $i\ge 1$, $c_i^{\rm crys}(\pi^*E_X)$ is a sum of products of $c_1^{\rm crys}(L_j)$, which by Subsection~\ref{ss:first} maps to $0$ in $H^2(D'_{(\bullet)}, \pi^*  \bar \Omega^\bullet_{D_{(\bullet)}})$. Thus 
$c_i^{\rm crys}(E_X) \in H^{2i}(D_{(\bullet)}, \bar \Omega^\bullet_{D_{(\bullet)}})$ maps to $0$ in $H^{2i}(D_{(\bullet)},  \bar \Omega^\bullet_{D_{(\bullet)}})$ via the composite map in \eqref{17}, which is the identity. This shows  Theorem~\ref{thm1:main}.

\begin{remark}\label{rem:torsion1} 
Let $W_n := W/p^nW$. Then, by replacing $W$ by $W_n$ and 
$D_{(\bullet)}$ by its mod $p^n$ reduction, we see that the proof above 
gives the following variant of Theorem \ref{thm1:main}: 
If $X$ is as in Theorem \ref{thm1:main} and
if $E$ is a locally free crystal on $X/W_n$, 
then $c_i^{\rm crys}(E_X)$ is zero in the torsion crystalline cohomology group 
$H^{2i}_{\rm crys}(X/W_n)$ for $i\ge 1$. Because 
$H^{2i}_{\rm crys}(X/W) \otimes_W W_n \to H^{2i}_{\rm crys}(X/W_n)$ is injective, 
it implies that the Chern classes $c_i^{\rm crys}(E_X)$ in $H^{2i}_{\rm crys}(X/W) \,(i \ge 1)$ 
are divisible by $p^n$ in this case. 
\end{remark}

\subsection{Remark on Chern-Simons theory}
In  \cite{Esn92} and \cite{Esn00}, a version of the modified splitting principle which is slightly more elaborate than the one used in  \ref{ss:mod_spl_pr} was performed in order to construct classes  $c_i(E,\nabla) \in H^i(X, \sK^M_i\xrightarrow{d \log} \Omega^i_X \xrightarrow{d}  \ldots)$ of a  bundle with an integrable connection $(E,\nabla)$, depending on $\nabla$, where $\sK^M_i$ is the Zariski sheaf of Milnor $K$-theory. Those classes  lift both the Chow classes in $CH^i(X)=H^i(X, \sK_i^M)$ via the obvious forgetful map, and the Chern-Simons  classes in $H^{2i-1}(X_{\rm an}, \C/\Z(i))$, if $k=\C$.  We hope to be able to define a crystalline version of Chern-Simons theory, yielding classes lifting both Chow classes and classes in syntomic cohomology. 

\section{The crystalline version of the discrete classifying space $BGL(r)$}\label{s:bg}
The aim of this section is to prove Theorem~\ref{thm1:main} using a crystalline version of the discrete classifying space $BGL(r)$. 

Let $X$ be a smooth variety defined over a perfect field $k$ of characteristic $p>0$.
\subsection{\v{C}ech-Alexander resolutions of $\sO_{X/W}$ }  \label{ss:cech}

Fix a closed embedding $X\hookrightarrow Y$ into a $p$-adic smooth formal scheme $Y$ over $W$, and one defines $D(n)$ as the PD-envelope of $X$ in the diagonal embedding $Y^{n+1}$, where $^{n+1}$ means the product over $W$. 
Then one has the canonical morphism of topo\"i 
  \ga{17.1}{ j_n: (X/W)_{\rm crys}|_{D(n)} \xrightarrow{} (X/W)_{\rm crys}}
and the \v{C}ech-Alexander resolution \cite[5.29]{BO78} of the abelian sheaf $\sO_{X/W}$
  \ga{17.2}{\sO_{X/W} \xrightarrow{\cong} (j_{0*}j_0^* \sO_{X/W} \to j_{1*}j_1^* \sO_{X/W} \to \ldots ).} 

We also use the following variant of \eqref{17.2}. Fix two closed embeddings $X\hookrightarrow Y$ and $X\hookrightarrow Z$, where both $Y/W$ and $Z/W$ are formal schemes and $Y/W$ is smooth, one defines $D(n)$ as the PD-envelope of $X$ in the diagonal embedding $Y^{n+1}\times_W Z$. Then one has the canonical morphisms of topo\"i
  \ga{18}{ j_n: (X/W)_{\rm crys}|_{D(n)} \xrightarrow{\ell_n} (X/W)_{\rm crys}|_{D(-1)} \xrightarrow{j_{-1}}
 (X/W)_{\rm crys}.}
  On $(X/W)_{\rm crys}|_{D(-1)}$, one has the \v{C}ech-Alexander resolution \cite[5.29]{BO78} of the abelian sheaf $j_{-1}^*\sO_{X/W}$
  \ga{19}{j_{-1}^*\sO_{X/W} \xrightarrow{\cong} (\ell_{0*}j_0^* \sO_{X/W} \to \ell_{1*}j_1^* \sO_{X/W} \to \ldots ).}
Hence, applying the exact functor $j_{-1*}$ (\cite[Cor.5.27.1]{BO78}), one obtains the resolution of the abelian sheaf $j_{-1*}j_{-1}^*\sO_{X/W}$
\ga{20}{j_{-1*} j_{-1}^*\sO_{X/W} \xrightarrow{\cong} (j_{0*}j_0^* \sO_{X/W} \to j_{1*}j_1^* \sO_{X/W} \to \ldots ).}

\subsection{Various simplicial constructions to compute crystalline cohomology} \label{ss:crys}
Let $X=\cup_{i\in I} X_i$ be a finite  covering of $X$ by affine open subvarieties. We assign to it the standard Mayer-Vietoris simplicial scheme, the definition of which we recall now. 

\medskip

We choose a total order on $I$, define the set of tuples $I^n_{\le }:= \{(i_0,\ldots, i_n)\  ; \  i_0\le i_1\le \ldots \le i_n)\}$, and $I_{\le }$ to be the disjoint union of the $I^n_{\le }$. For $J =(i_0, \ldots, i_n) \in I_{\le }$, one sets $X_J=\cap_{i_j\in J} X_{i_j}$. One upgrades $I_{\le }$ to a category. The Hom-set
${\rm Hom}_{I_{\le }}( J, J')$, for  $ J=(i_0,\ldots, i_n)$ and  $ J'=(i'_0,\ldots, i'_{n'})$,  consists of those non-decreasing maps $\varphi: [n]\to [n']$, where $[n]=\{0,1,\ldots, n\}$,  with the property that $i_a=i'_{\varphi(a)}$ for all $a\in \{0,\ldots, n\}$. Thus to $\varphi \in {\rm Hom}_{I_{\le }}(J, J')$,  one assigns
 the open embedding $ X_{J'} \hookrightarrow X_J$,  which one denotes by $\varphi^*$. 

If $\varphi \in {\rm Hom}_{I_{\le }} (J, J')$ with $J'=(i'_0,\ldots, i'_{n'})$,
$J$ is necessarily equal to $(i'_{\varphi(0)}, \ldots, i'_{\varphi(n)})$. Thus, given any non-decreasing map $\varphi: [n]\to [n']$ and $J' \in I_{\le}^{n'}$,  there is one and only one $J \in I_{\le}^n$ such that $\varphi \in {\rm Hom}_{I_{\le }} (J, J')$, in particular the open embedding $\varphi^*: X_{J'} \to X_J$ is determined as well.  Denoting by $X_{(n)} =\sqcup_{J\in I^n_{\le }} X_J$ the disjoint union of the $X_J$ over all the  $J\in I^n_{\le }$, one defines the map 
\ga{21}{ \varphi^*  : X_{(n')}\to X_{(n)} }
for a non-decreasing map $\varphi: [n]\to [n']$ as the disjoint union of the maps $X_{J'} \xrightarrow{\varphi^*} X_J \hookrightarrow X^{(n)}$ 
for $J' \in I^{n'}_{\le}$. Using the definition \cite[Section~5]{Del74},  \eqref{21} defines the simplicial scheme $X_{(\bullet )}$  which augments to $X$
\ga{22}{X_{(\bullet)} \xrightarrow{\epsilon} X.}

\begin{remark}
The simplicial scheme $X_{(\bullet)}$ here differs from 
the simplicial scheme $X_{(\bullet)}$ which appeared in Section \ref{s:spl}.

\end{remark}

The aim of this section is to prove the 
\begin{prop} \label{prop:crys_simpl}
The augmentation map induces a quasi-isomorphism 
\ga{23}{\epsilon^*: R\Gamma((X/W)_{\rm crys}, \sO_{X/W}) \to  R\Gamma((X_{(\bullet)}/W)_{\rm crys}, \sO_{X_{(\bullet )}/W})}
in the derived category  $D({\rm Ab})$ of abelian groups. 
\end{prop}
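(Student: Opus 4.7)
The plan is to reduce the crystalline assertion to Zariski cohomological descent via the canonical projection morphisms of topoi $u\colon (X/W)_{\rm crys}\to X_{\rm Zar}$ and $u_{\bullet}\colon (X_{(\bullet)}/W)_{\rm crys}\to X_{(\bullet),{\rm Zar}}$. The Leray identifications give $R\Gamma((X/W)_{\rm crys},\sO_{X/W})\cong R\Gamma(X_{\rm Zar},Ru_{*}\sO_{X/W})$ and $R\Gamma((X_{(\bullet)}/W)_{\rm crys},\sO_{X_{(\bullet)}/W})\cong R\Gamma(X_{(\bullet),{\rm Zar}},Ru_{\bullet *}\sO_{X_{(\bullet)}/W})$, so the statement is transported onto the Zariski side.

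I would then invoke the locality of crystalline cohomology with respect to Zariski restriction (cf.~\cite[Section~5]{BO78}): for any open immersion $j\colon U\hookrightarrow X$, the base-change map yields a canonical quasi-isomorphism $j^{*}Ru_{*}\sO_{X/W}\cong Ru_{*}\sO_{U/W}$ in the derived category of $U_{\rm Zar}$. Applied to each open piece $X_J=X_{i_0}\cap\cdots\cap X_{i_n}$ appearing in $X_{(\bullet)}$, and assembled compatibly with the face and degeneracy maps, this produces a natural quasi-isomorphism $\epsilon^{*}Ru_{*}\sO_{X/W}\simeq Ru_{\bullet *}\sO_{X_{(\bullet)}/W}$ in the derived category of Zariski sheaves on the simplicial scheme $X_{(\bullet)}$.

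At this point the proposition reduces to the purely Zariski-theoretic statement that, for every complex $\sG$ of abelian sheaves on $X_{\rm Zar}$, the augmentation induces a quasi-isomorphism $R\Gamma(X_{\rm Zar},\sG)\to R\Gamma(X_{(\bullet),{\rm Zar}},\epsilon^{*}\sG)$. But $X_{(\bullet)}\to X$ is by construction the \v{C}ech nerve of the open affine cover $\{X_i\}_{i\in I}$, hence a Zariski hypercover, and this is the standard cohomological descent for Zariski hypercovers (see \cite[Section~5]{Del74}); applying it to the complex $\sG=Ru_{*}\sO_{X/W}$ and combining with the previous step proves the proposition.

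The main obstacle is the simplicial coherence in the second step: one must check that the individual locality isomorphisms $j^{*}Ru_{*}\sO_{X/W}\cong Ru_{*}\sO_{U/W}$ fit together into a morphism of simplicial objects in the derived category, i.e.~that they commute with all the face and degeneracy maps induced by the order-preserving maps $\varphi\colon [n]\to[n']$ of Subsection~\ref{ss:crys}. This is formal, following from the naturality of the projection $u$ and from the functoriality of the crystalline topos under open immersions, but it is the one point where a careful verification is needed rather than a black-box appeal.
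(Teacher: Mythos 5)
Your reduction to the Zariski site is sound as far as it goes: the identification $R\Gamma((X/W)_{\rm crys},\sO_{X/W})\cong R\Gamma(X_{\rm Zar},Ru_*\sO_{X/W})$, the locality isomorphism $j^*Ru_*\sO_{X/W}\cong Ru_{U*}\sO_{U/W}$ for an open immersion $j\colon U\hookrightarrow X$, and the simplicial coherence of these isomorphisms are all correct (and this is a genuinely different route from the paper, which never leaves the crystalline topos and instead runs an induction on $|I|$ using the bisimplicial subdivision $X_{(\bullet,\bullet)}$ and the Mayer--Vietoris triangle of \cite[V (3.5.4)]{Ber74}).

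The gap is in your last step. The simplicial scheme $X_{(\bullet)}$ of Subsection~\ref{ss:crys} is \emph{not} the \v{C}ech nerve of the cover $\{X_i\}$: it is the ordered Mayer--Vietoris object, with $X_{(n)}=\sqcup_{J\in I^n_{\le}}X_J$ indexed only by \emph{non-decreasing} tuples $(i_0\le\cdots\le i_n)$ (the paper even remarks that it differs from the \v{C}ech hypercovering used in Section~2). This object is not a hypercover: already in degree $1$ the map $X_{(1)}\to({\rm cosk}_0X_{(\bullet)})_1=\sqcup_{(i,j)\in I^2}X_i\cap X_j$ lands only in the components with $i\le j$ and so is not surjective. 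Consequently the cohomological descent theorem for Zariski hypercovers that you invoke from \cite[Section~5]{Del74} does not apply as stated. To close the gap you would need either to insert the comparison between the ordered and the full \v{C}ech nerves (the simplicial-scheme analogue of the classical quasi-isomorphism between the alternating and the full \v{C}ech complexes, applied to the complex $Ru_*\sO_{X/W}$), or to prove descent for the ordered object directly --- which is exactly what the paper's induction on $|I|$, splitting $I=\{0\}\sqcup I''$ and comparing the two Mayer--Vietoris triangles in \eqref{29}, accomplishes. The repair is standard, but as written the key step rests on a misidentification of the simplicial object.
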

\begin{proof}
The proof goes by induction on the cardinality  $|I|$ of $I$. If $|I|=1$,   one sets $G=R\Gamma((X/W)_{\rm crys}, \sO_{X/W}) \in D({\rm Ab})$. Then the right hand side of \eqref{23} reads
\ga{24}{ G\xrightarrow{ \alpha_0}  G   \to \ldots \xrightarrow{\alpha_n}  G \to \ldots }
where  $\alpha_n= {\rm id}$ for $n$ even, and $\alpha_n=0$ for $n$ odd. 

For $|I|>1$, we subdivide the simplicial construction $X_{(\bullet)}$ as follows.  Let $0$ be the minimal element of $I$. One sets $=I'\sqcup I''$, with $I'=\{0\}$ and $I''=I\setminus I'$.  Then one has
\ga{25}{ I_{\le }=I'_{\le } \sqcup (\sqcup_{ (n,m) \in \N} {I}^{'n}_{\le }\times {I}^{''m}_{\le })\sqcup I''_{\le }.}

Setting $X_{J',J''}=X_{J'}\cap X_{J''}$ for  $J'\in I'_{\le }$ and $J''\in I''_{\le }$, one sets $X_{(n,m)} =\sqcup_{ J'\in I_{\le }^{'n}, J'' \in I_{\le }^{''m}  } X_{J',J''}$. Then $X_{(\bullet, \bullet)}$ forms a bisimplicial scheme (\cite[p.~17]{Fri82}) and one has the commutative diagram
\ga{26}{\xymatrix{ \ar[d]_{\epsilon '} X'_{(\bullet)} & \ar[l] X_{(\bullet, \bullet)} \ar[r] & X''_{(\bullet )} \ar[d]^{\epsilon ''} \\
X'  &  &  X''}
}
with $X'=X_0 $ and $X''=\cup_{i\in I\setminus \{0\}} X_i$.
By induction, Proposition~\ref{prop:crys_simpl} applies to $\epsilon'$ and $\epsilon''$. On the other hand, $X_{(\bullet ,m)}$ is the constant simplicial scheme on $X' \cap X''_{(m)}$.  So by the case $|I|=1$, one has
\ga{27}{ R\Gamma((X'\cap X''_{(m)}/W)_{\rm crys}, \sO_{ X' \cap X''_{(m)} /W }) \xrightarrow{\cong} R\Gamma((X_{(\bullet,m)}/W)_{\rm crys}, \sO_{ X_{(\bullet,m)}/W  }).}
From this and the induction hypothesis one deduces the isomorphism
\begin{align}
\epsilon_1^*: 
R\Gamma((X' \cap X''/W)_{\rm crys}, \sO_{ X' \cap X''/W }) & \xrightarrow{\cong}
R\Gamma((X' \cap X''_{(\bullet)}/W)_{\rm crys}, \sO_{ X' \cap X'' _{(\bullet)} /W })  \label{28} \\ & 
\xrightarrow{\cong} R\Gamma((X_{(\bullet,\bullet)}/W)_{\rm crys}, \sO_{ X_{(\bullet,\bullet)}/W  }). \nonumber 
\end{align}
One now extends \eqref{23} to a diagram 
\ga{29}{\tiny{ \xymatrix{\ar[d]  R\Gamma((X/W)_{\rm crys}, \sO_{X/W}) \ar[r]^{\epsilon^*}  &  R\Gamma((X_{(\bullet)}/W)_{\rm crys}, \sO_{X''_{(\bullet )}/W}) \ar[d] \\
\ar[d] R\Gamma((X'/W)_{\rm crys}, \sO_{X'/W}) \oplus  R\Gamma((X''/W)_{\rm crys}, \sO_{X''/W}) \ar[r]^{\epsilon^{'*} \oplus \epsilon^{''*}} &
 R\Gamma((X'_{(\bullet)}/W)_{\rm crys}, \sO_{X'_{(\bullet )}/W}) \oplus R\Gamma((X''_{(\bullet)}/W)_{\rm crys}, \sO_{X''_{(\bullet )}/W})  \ar[d] \\
 R\Gamma((X' \cap X''/W)_{\rm crys}, \sO_{ X' \cap X'' /W })  \ar[r]^{\epsilon_1^*} & R\Gamma((X_{(\bullet,\bullet)}/W)_{\rm crys}, \sO_{ X_{(\bullet,\bullet)}/W  })
 } }}
where the left vertical triangle is induced by the quasi-isomorphism 
in \cite[V (3.5.4)]{Ber74} and the right one exists by construction. 
As $\epsilon^{'*}, \epsilon^{''*}, \epsilon_1^*$ are isomorphisms, so is $\epsilon^*$.
\end{proof} 

\subsection{Lifting the simplicial construction to PD-envelopes}
Keeping the same  notations, we choose for each affine $X_i$ a closed embedding $X_i\hookrightarrow Y_i$ into a 
smooth $p$-adic formal scheme $Y_i$ over $W$.  One defines the PD-envelope $\alpha_J: X_J\hookrightarrow D_J$  of $X_J \hookrightarrow Y_{i_0}\times_W \ldots \times_W Y_{i_n}$ for $J=(i_0,\ldots, i_n)\in I^n_{\le }$. As in \eqref{22} and \eqref{26} one has the simplicial formal scheme $D_{(\bullet)}$ and 
the diagram of 
simplicial formal schemes
\ga{30}{{\xymatrix{  D'_{(\bullet)} & \ar[l] D_{(\bullet, \bullet)} \ar[r] & D''_{(\bullet )}}
}}
this time without augmentation. 
On the other hand, for each $J\in I_{\le }$, one has as in \eqref{1} the diagram of topo\"i
\ga{31}{   \xymatrix{ \ar[d]_{j_J} (X_J/W)_{\rm crys}|_{D_J} \ar[r]^{\varphi_J} & D_{J,\rm Zar}=X_{J,\rm Zar}\\
(X_J/W)_{\rm crys}\ar[ur]_{u_J}
}
}
to which one applies \eqref{3} and the quasi-isomorphism \eqref{4}, which  in addition is functorial.  Thus, combined with  \eqref{23},  it yields quasi-isomorphisms
\ml{32}{ R\Gamma((X/W)_{\rm crys}, \sO_{X/W})\xrightarrow{\cong} R\Gamma((X_{(\bullet)}/W)_{ \rm crys}, \sO_{X_{(\bullet)}/W}) )\xrightarrow{\cong} \\
R\Gamma((X_{(\bullet)}/W)_{ \rm crys}, L( \bar  \Omega^\bullet_{D_{(\bullet)}})) \xleftarrow{\cong} 
R\Gamma(D_{(\bullet)}, \bar  \Omega^\bullet_{D_{(\bullet)}}). 
} 
(The last isomorphism follows from \cite[Cor.~5.27.2]{BO78}.) 
\begin{prop}  \label{prop:OmegaO}
The forgetful morphism
\ga{33}{ R\Gamma (D_{(\bullet)}, \bar \Omega^\bullet_{D_{(\bullet)}}) \to 
R\Gamma (D_{(\bullet)}, \sO_{D_{(\bullet)}})}
is a quasi-isomorphism. 
\end{prop}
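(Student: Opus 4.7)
The plan is to show that both sides of \eqref{33} compute the crystalline cohomology $R\Gamma_{\rm crys}(X/W, \sO_{X/W})$ and that the forgetful morphism is identified with the identity under this common description. The left-hand side is already identified with $R\Gamma_{\rm crys}(X/W, \sO_{X/W})$ by the chain of quasi-isomorphisms \eqref{32}, which assembles Proposition~\ref{prop:crys_simpl}, the Berthelot--Ogus PD Poincar\'e quasi-isomorphism \eqref{4}, and \cite[Cor.~5.27.2]{BO78}. The task is therefore to produce a parallel identification for the right-hand side and to intertwine the two chains via the canonical projection $\bar\Omega^\bullet \to \bar\Omega^0 = \sO$.

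For the right-hand side, the key geometric observation is that $D_{(\bullet)}$ is precisely the \v{C}ech nerve of the family $\{D_i\}_{i\in I}$ in the crystalline site of $X/W$: by the universal property of PD envelopes, for $J=(i_0,\ldots,i_n) \in I_\leq^n$, the formal scheme $D_J$ coincides with the $(n+1)$-fold fibre product $D_{i_0} \times \cdots \times D_{i_n}$ in that site. Since $\bigcup_i X_i = X$ is a Zariski cover and each $Y_i$ may be chosen affine (as $X_i$ is affine), every $D_J$ is affine and the structure sheaf is acyclic on the restricted crystalline topos $(X/W)_{\rm crys}|_{D_J}$ with $R\Gamma((X/W)_{\rm crys}|_{D_J}, \sO) = \Gamma(D_J, \sO_{D_J})$, by \cite[5.27]{BO78}. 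The \v{C}ech-to-derived spectral sequence for this cover then yields
\[
R\Gamma_{\rm crys}(X/W, \sO_{X/W}) \xrightarrow{\cong} R\Gamma(D_{(\bullet)}, \sO_{D_{(\bullet)}}).
\]
Equivalently, this identification can be obtained by an induction on $|I|$ that mimics the Mayer--Vietoris argument of Proposition~\ref{prop:crys_simpl}: the base case $|I|=1$ reduces $D_{(\bullet)}$ to the \v{C}ech--Alexander nerve of the single smooth embedding $X \hookrightarrow Y_1$, to which the resolution \eqref{17.2} applies directly; the inductive step proceeds via a diagram analogous to \eqref{29}, using the two-embedding variant \eqref{20} at the intersection step.

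The compatibility of the forgetful morphism with these two identifications is essentially formal: both chains arise by applying $R\Gamma$ to quasi-isomorphisms $\sO_{X/W} \to L(\bar\Omega^\bullet_{D_{(\bullet)}})$ and $\sO_{X/W} \to L(\sO_{D_{(\bullet)}})$ in $(X/W)_{\rm crys}$, and these are intertwined by the natural projection $\bar\Omega^\bullet \to \sO$, which induces the identity on $\sO_{X/W}$. The main obstacle is the simplicial \v{C}ech-to-derived bookkeeping needed to establish the right-hand identification; the Mayer--Vietoris induction is the cleanest route, as it stays entirely within the framework already developed for Proposition~\ref{prop:crys_simpl} and avoids invoking more elaborate topos-theoretic descent machinery.
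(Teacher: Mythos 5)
Your proposed Mayer--Vietoris induction --- base case via the \v{C}ech--Alexander resolution \eqref{17.2}, inductive step via the two-embedding variant \eqref{20} and a diagram analogous to \eqref{29} --- is precisely the paper's proof, which reduces \eqref{33} to showing that $R\Gamma((X/W)_{\rm crys},\sO_{X/W}) \to R\Gamma((X_{(\bullet)}/W)_{\rm crys}, L(\sO_{D_{(\bullet)}}))$ is a quasi-isomorphism and then runs exactly this induction. Your alternative route via the \v{C}ech-to-derived spectral sequence is sound in spirit, but note that $D_{(\bullet)}$ here is indexed by non-decreasing tuples, i.e.\ it is the ordered variant of the \v{C}ech nerve of $\{D_i\}_{i\in I}$, so that route would need the standard comparison with the full nerve --- a step the induction (and the paper) avoids.
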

\begin{proof}
It is enough to show that the map
\ga{34}{ R\Gamma((X/W)_{\rm crys}, \sO_{X/W}) \to R\Gamma((X_{(\bullet)}/W)_{{\rm crys}}, L(\sO_{D_{(\bullet)}}))}
is a quasi-isomorphism. 
As for Proposition~\ref{prop:crys_simpl}, we argue by induction in $|I|$. For  $|I|=1$, the right hand side is $R\Gamma((X/W)_{\rm crys}, (-))$ of the right hand side of \eqref{17.2}, thus computes $R\Gamma((X/W)_{\rm crys}, \sO_{X/W})$. For general $I$, we argue 
as in the proof of Proposition~\ref{prop:crys_simpl}. If we put $D' := D_0$, 
we have the isomorphism 
\ga{34.1}{ R\Gamma((X'\cap X''_{(m)}/W)_{\rm crys}, L(\sO_{D''_{(m)}})) \xrightarrow{\cong} R\Gamma((X_{(\bullet,m)}/W)_{\rm crys}, L(\sO_{D_{(\bullet,m)}}))} 
which is $R\Gamma((X/W)_{\rm crys}, (-))$ of the resolution \eqref{20}. 
From this and the induction hypothesis one deduces the isomorphism
\begin{align}
\epsilon_1^*: 
R\Gamma((X' \cap X''/W)_{\rm crys}, \sO_{ X' \cap X''/W }) & \xrightarrow{\cong}
R\Gamma((X' \cap X''_{(\bullet)}/W)_{\rm crys}, L(\sO_{D''_{(\bullet)}}))  \label{34.2} \\ & 
\xrightarrow{\cong} R\Gamma((X_{(\bullet,\bullet)}/W)_{\rm crys}, 
L(\sO_{ D_{(\bullet,\bullet)}})). \nonumber 
\end{align}
Then one has the diagram as \eqref{29} with the triangle in the right column 
is replaced by the triangle 
\begin{align}
& R\Gamma((X_{(\bullet)}/W)_{\rm crys}, 
L(\sO_{ D_{(\bullet)}})) \label{34.3} \\ 
\to \, &
R\Gamma((X'_{(\bullet)}/W)_{\rm crys}, 
L(\sO_{ D'_{(\bullet)}})) \oplus 
R\Gamma((X''_{(\bullet)}/W)_{\rm crys}, 
L(\sO_{ D''_{(\bullet)}})) \nonumber \\ 
\to \, &
R\Gamma((X_{(\bullet,\bullet)}/W)_{\rm crys}, 
L(\sO_{ D_{(\bullet,\bullet)}})), \nonumber 
\end{align}
which exists by consrtuction. 
Thus one concludes by induction.

\subsection{Proof of Theorem~\ref{thm1:main}}
One defines $G$ to be the group scheme $GL(r)$ over $\Z$, and for any scheme $S$, one writes $G_S$ for the induced group scheme over $S$, or by abuse of notations $G_R$ for  $S={\rm Spec}(R)$.  So one has $G_k, G_W$ and its $p$-adic completion $\hat G_W$. One has the classifying (formal) simplicial schemes  $BG_k, BG_W, B \hat G_W$. The datum of a locally free crystal $E$ of rank $r$, together with a trivialization
of $E_X|_{X_i}=E_{X_i}$, where $X=\cup_{i\in I} X_i$ is a finite affine covering, and a lift of the trivialization to a trivialization of $E_D|_{D_i}$, yields, via the transition functions, a commutative diagram
\ga{35}{\xymatrix{ \ar[d]_f X_{(\bullet)} \ar[r]^{\alpha}  & D_{(\bullet)} \ar[d]^g  \ar[dr]^h \\
   BG_k \ar[r]^{\hat \beta}  & B\hat G_W \ar[r]^{\iota} & BG_W
}}
where $\alpha$, $\hat  \beta$  and $\iota$ are the canonical morphisms.
The crystalline Chern classes come as the pull-back from the universal Chern classes via 
\ga{36}{ f^*: H^{2i}((BG_k/W)_{\rm crys}, \sO_{BG_k/W})\to H^{2i}((X_{(\bullet )}/W)_{\rm crys},  \sO_{X_{(\bullet)}/W})}
computed via
\ga{37}{ h^*:  H^{2i}(BG_W,  \Omega^\bullet_{BG_W})  \xrightarrow{\iota^*}  H^{2i}(B \hat G_W,  \Omega^\bullet_{B\hat G_W})\xrightarrow{g^*}  H^{2i}(D_{(\bullet )}, \bar  \Omega^\bullet_{D_{(\bullet)}/W}) \xrightarrow{\cong} 
H^{2i}(D_{(\bullet )}, \sO^\bullet_{D_{(\bullet)}/W}). 
}
Thus $h^*$ factors through  the forgetful map  $H^{2i}(BG_W,  \Omega^\bullet_{BG_W})  \to H^{2i}(BG_W,  \sO_{BG_W})$, which is zero for $i\ge 1$ 
because the composition of it with the surjective map 
$H^{2i}(BG_W,  \Omega^{\ge i}_{BG_W}) \to H^{2i}(BG_W,  \Omega^\bullet_{BG_W})$ 
(\cite[II Thm. ~1.1]{Gro90}) is zero. 
This proves Theorem~\ref{thm1:main}. 
\end{proof}

\begin{remark}\label{rem:torsion2} 
By replacing $W$ by $W_n = W/p^nW$, 
we see that the proof above 
also gives the following variant of Theorem \ref{thm1:main}: 
If $X$ is as in Theorem \ref{thm1:main} and
if $E$ is a locally free crystal on $X/W_n$, 
then $c_i^{\rm crys}(E_X)$ is zero in the torsion crystalline cohomology group 
$H^{2i}_{\rm crys}(X/W_n)$ for $i\ge 1$. 
In the final step, we need the surjectivity 
of the map $H^{2i}(BG_{W_n},  \Omega^{\ge i}_{BG_{W_n}}) \to H^{2i}(BG_{W_n},  \Omega^\bullet_{BG_{W_n}})$, but it follows from the isomorphism 
$$H^{2i}(BG_{W},  \Omega^\bullet_{BG_{W}}) \otimes_W W_n = 
H^{2i}(BG_{W_n},  \Omega^\bullet_{BG_{W_n}}) $$ which is true because 
$H^{n}(BG_{W},  \Omega^\bullet_{BG_{W}}) \, (n \in \N)$ 
are free over $W$ (\cite[II Thm. ~1.1]{Gro90}). 
\end{remark}



\section{Generalization of the main Theorem~\cite[Thm.1.1]{ES15} from convergent isocrystals to isocrystals possessing a locally free lattice} \label{s:thm1.1}
The aim of this section is to prove Theorem~\ref{thm:dejonglocfree}.
The proof is the same as the one of \cite[Thm.1.1]{ES15} except some points 
which we explain now. 

\medskip

First assume that $k$ is algebraically closed. 
We have $H^1_{\rm crys}(X/W)=0$ by \cite[Prop. ~2.9(2)]{ES15}. 
(See also Theorem \ref{thm:van}.) So we may assume that $\sE$ is irreducible 
(of rank $s \leq r$) to prove the theorem. 
Also, there exists $N \in \N$ such that 
the restriction $H^1_{\rm crys}(X/W_n) \to H^1_{\rm crys}(X/k)$ is zero 
for any $n \ge N$. 
By using Theorem~\ref{thm1:main} (and the remark after it) in place of 
\cite[Prop.~3.1]{ES15}, we see that $\sE$ admits a lattice $E$ with 
$E_X$ strongly $\mu$-stable as $\sO_X$-module, by \cite[Prop.~4.2]{ES15}. 
Because $E_X$ has vanishing Chern classes, we can argue as in 
Section 3 in \cite{ES15}, and by \cite[Cor.~3.8]{ES15}, we see that 
there exists $a \in \N$ such that $(F^a)^*E_N \in {\rm Crys}(X/W_N)$ is trivial. 

Let $\sD_{n,m}$ be the category of pairs $(G, \varphi)$, where 
$G \in {\rm Crys}(X/W_{n+m})$ and $\varphi$ is an isomorphism 
between the restriction of $G$ to ${\rm Crys}(X/W_n)$ and $\sO_{X/W_n}^s$. 
Then the same computation as \cite[Prop.~3.6]{ES15} implies the isomorphism 
as pointed sets 
\ga{37.01}{\sD_{n,m} \cong M(s\times s, H^1_{\rm crys}(X/W_m))} 
for $1 \leq m \leq n$, which is compatible with respect to $m$.

Applying \eqref{37.01} to the pairs $(n,m)=(N, N), (N,1)$, we conclude that 
$(F^a)^*E_{N+1} \in {\rm Crys}(X/W_{N+1})$, which is the image of 
$(F^a)^*E_{2N} \in {\rm Crys}(X/W_{2N})$ via the restriction 
$$ \sD_{N,N} \cong M(s\times s, H^1_{\rm crys}(X/W_N)) \to 
M(s\times s, H^1_{\rm crys}(X/W_1)) \cong \sD_{N,1} $$
is trivial. We continue similarly to show that 
$(F^a)^*E_{n} \in {\rm Crys}(X/W_{n})$ is trivial for all $n \ge N$. 
Hence $(F^a)^*E$ is trivial. Because 
the endofunctor $F^*: {\rm Crys}(X/W) \to {\rm Crys}(X/W)$ is fully faithful 
at least when restricted to locally free crystals (\cite[Ex. 7.3.4]{Ogu94}), 
we see that $E$ itself is trivial. Hence $\sE$ is also trivial. 

Now we prove the theorem for general $k$. 
If we take a locally free lattice $E$ of $\sE$ and denote the 
pullback of $E$ to $(X \otimes_k \overline{k}/W(\overline{k}))_{\rm crys}$ by 
$\overline{E}$, we see by base change theorem that 
$\Q \otimes_{\Z} (W(\overline{k}) \otimes_{W} H^0((X/W)_{\rm crys}, E)) \cong \Q \otimes_{\Z} 
H^0((X \otimes_k \overline{k}/W(\overline{k}))_{\rm crys}, E)$, and the latter 
is $s$-dimensional over the fraction field of $W(\overline{k})$. 
So $\Q \otimes_{\Z} H^0((X/W)_{\rm crys}, E)$ is $s$-dimensional over $K$ and 
hence $\sE$ is trivial. 


\section{Gau{\ss}-Manin convergent isocrystal} \label{s:gm}
The aim of this section is to prove Theorem~\ref{thm2:main}.  The proof is inspired by the discussion with G. Faltings related in the introduction. 

\subsection{First rigid cohomology of a smooth simply connected variety is trivial} \label{ss:H1}
We shall prove the following theorem which is used in order to pass from the triviality of the semi-simplification of 
the Gau{\ss}-Manin overconvergent isocrystal to the triviality of the Gau{\ss}-Manin overconvergent isocrystal itself.

As usual, if $k$ is a perfect field, then one denotes by $W=W(k)$ its ring of Witt vectors and by $K$ the field of fractions of $W$.
\begin{thm} \label{thm:van}
Let $X$ be a smooth connected variety defined over an algebraically closed field $k$ of 
characteristic $p>0$, and assume that 
$X$ is proper or $p \geq 3$. If 
$\pi_1^{\rm \acute{e}t,ab}(X)=0$, then 
$H^1_{\rm rig}(X/K)=0$. 
\end{thm}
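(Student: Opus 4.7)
\smallskip

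\textbf{Plan.} I would split according to whether $X$ is proper.

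\textbf{Proper case.} For $X$ smooth proper one has $H^1_{\rm rig}(X/K)\cong K\otimes_W H^1_{\rm crys}(X/W)$. The plan is to show the Albanese variety $\operatorname{Alb}(X)$ is trivial. By the universal property of $\operatorname{Alb}$, the map $\pi_1^{\rm \acute{e}t,ab}(X)\twoheadrightarrow \pi_1^{\rm \acute{e}t}(\operatorname{Alb}(X))=\hat T(\operatorname{Alb}(X))$ is surjective, so vanishing of $\pi_1^{\rm \acute{e}t,ab}(X)$ forces the Tate module, and hence $\operatorname{Alb}(X)$ itself, to be zero. Pullback along the Albanese morphism then gives an isomorphism $H^1_{\rm crys}(\operatorname{Alb}(X)/W)\otimes K\xrightarrow{\cong}H^1_{\rm crys}(X/W)\otimes K$, so both sides vanish. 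This is essentially \cite[Prop.~2.9(2)]{ES15}.

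\textbf{Non-proper case with $p\ge 3$.} Choose a smooth compactification $\bar X\supset X$ with strict normal crossings boundary $D=\bigcup_i D_i$ (using resolution/alterations; the role of $p\ge 3$ will enter here through the log-crystalline / rigid comparison of Nakkajima--Shiho, which is known to require mild hypotheses on $p$). The strategy is to apply the Gysin/residue exact sequence in rigid cohomology
\[
0\to H^1_{\rm rig}(\bar X/K)\to H^1_{\rm rig}(X/K)\xrightarrow{\mathrm{Res}}\bigoplus_i K(-1)\xrightarrow{\mathrm{Gys}} H^2_{\rm rig}(\bar X/K),
\]
where $\mathrm{Gys}$ sends the $i$-th generator to $c_1^{\rm rig}(\sO_{\bar X}(D_i))$. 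Since $\pi_1^{\rm \acute{e}t,ab}(X)\twoheadrightarrow \pi_1^{\rm \acute{e}t,ab}(\bar X)$, the proper case gives $H^1_{\rm rig}(\bar X/K)=0$, so I reduce to proving that $\mathrm{Gys}$ is injective, i.e.\ that the classes $c_1^{\rm rig}([D_i])$ are $K$-linearly independent in $H^2_{\rm rig}(\bar X/K)$.

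\textbf{Independence via Kummer.} The hypothesis $\pi_1^{\rm \acute{e}t,ab}(X)=0$ gives $H^1_{\rm \acute{e}t}(X,\mu_n)=0$ for every $n$ coprime to $p$. The Kummer sequence
\[
0\to \Gamma(X,\sO_X^\times)/n\to H^1_{\rm \acute{e}t}(X,\mu_n)\to \mathrm{Pic}(X)[n]\to 0
\]
then forces $\Gamma(X,\sO_X^\times)$ to be $n$-divisible for all such $n$. Since $k^\times$ is divisible and $\Gamma(X,\sO_X^\times)/k^\times$ embeds into the free abelian group $\bigoplus_i\Z\cdot D_i$ via the divisor map, divisibility collapses the quotient: $\Gamma(X,\sO_X^\times)=k^\times$. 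The excision sequence for Picard groups then shows that the $[D_i]$ are $\Z$-linearly independent in $\mathrm{Pic}(\bar X)$. Running the same argument on $\bar X$ gives $\mathrm{Pic}^0(\bar X)=0$ and that $\mathrm{NS}(\bar X)=\mathrm{Pic}(\bar X)$ is torsion-free. Consequently $c_1^{\rm rig}\colon \mathrm{NS}(\bar X)\otimes K\to H^2_{\rm rig}(\bar X/K)$ is injective (by comparison with the corresponding $\ell$-adic cycle map, using Katz--Messing style statements for $\bar X$ smooth proper), and the independence of the $[D_i]$ in $\mathrm{NS}(\bar X)$ translates to independence of the $c_1^{\rm rig}([D_i])$, finishing the proof.

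\textbf{Main obstacle.} The conceptual content is elementary once one has the Gysin sequence and the $c_1$-injectivity; the technical heart is to produce a smooth compactification with SNC boundary in positive characteristic and to make the residue/Gysin long exact sequence rigorous, which is precisely the point where the restriction $p\ge 3$ is expected to be needed. Formally verifying that the residue map identifies with the geometric cycle class of $D_i$ via the rigid Chern class will be the main bookkeeping step.
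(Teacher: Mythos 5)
Your proper case is correct and is essentially the paper's own argument: the paper kills $\Pic^0_{\rm red}(X)$ rather than the Albanese and then invokes the Dieudonn\'e-crystal description of $H^1_{\rm crys}(X/W)$, but up to duality this is the same reduction to the vanishing of $H^1_{\rm \acute{e}t}(X,\Q_\ell)$.

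The non-proper case has a genuine gap. Your whole strategy starts from ``choose a smooth compactification $\bar X\supset X$ with strict normal crossings boundary,'' and in characteristic $p>0$ such a compactification is not known to exist once $\dim X>3$: resolution of singularities is open, and de Jong's alterations do not produce a compactification of $X$ itself — they produce a smooth projective SNC pair $(\bar X',D')$ together with a proper, generically finite map $\bar X'\setminus D'\to X$ which is not an open immersion. So you cannot form the Gysin/residue sequence for the pair $(X,\bar X)$, and trying to descend along the alteration would require controlling $\pi_1$ and $H^1_{\rm rig}$ of the cover, which your hypothesis does not give you. (Granting the compactification, the rest of your argument is sound: units on $X$ are constant, so the $[D_i]$ are $\Z$-independent in $\Pic(\bar X)$, hence independent in ${\rm NS}(\bar X)\otimes\Q$ once $\Pic^0(\bar X)=0$ — you do not even need torsion-freeness of ${\rm NS}(\bar X)$, which your Kummer argument in any case only gives prime to $p$ — and injectivity of $c_1$ on ${\rm NS}\otimes\Q$ then makes the Gysin map injective.) The paper sidesteps resolution entirely: it uses the Picard $1$-motive $M={\rm Pic}^+(X)=[L\to G]$ of Andreatta--Barbieri-Viale, whose $\ell$-adic realization is $H^1_{\rm \acute{e}t}(X,\Q_\ell)$ by Mannisto and whose crystalline realization is $H^1_{\rm rig}(X/K)$ when $p\ge 3$; the hypothesis kills the $\ell$-adic realization, hence $V_\ell G$ and $V_\ell L$, hence $G$, $L$, $M$ and all realizations of $M$. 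This also corrects your guess about where $p\ge 3$ enters: it is the identification of the crystalline realization of $M$ with $H^1_{\rm rig}(X/K)$ in [ABV05], not a log-crystalline comparison for an SNC compactification.
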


\begin{proof}
If $X$ is proper, this is \cite[ Thm.~0.1 1)]{ES15}. Else, 
let $\ell$ be a prime not equal to $p$. 
When $X$ is proper, the assumption $\pi_1^{\rm \acute{e}t,ab}(X)=0$ 
implies that 
$0 = H^1_{\rm \acute{e}t}(X,\Q_{\ell}) = 
H^1_{\rm \acute{e}t}({\rm Pic}^0_{\rm red}(X),\Q_{\ell})$ and so 
${\rm Pic}^0_{\rm red}(X) = 0$. Hence $H^1_{\rm crys}(X/W)=0$ because it is 
the Dieudonn\'e crystal associated to ${\rm Pic}^0_{\rm red}(X)$ and so 
$H^1_{\rm rig}(X/K)$ is also equal to $0$. 

In case $X$ is not proper, we use the Picard $1$-motive $M := {\rm Pic}^+(X)$ 
defined in \cite{ABV05}. Put $M = [L \to G]$, where $L$ is a $\Z$-module 
and $G$ is a semi-abelian variety. 
Because the $\ell$-adic realization 
$V_lM$ of $M$ is equal to $H^1_{\rm \acute{e}t}(X,\Q_{\ell})$ by \cite{Man13} and 
 fits into the exact sequence 
$$ 0 \longrightarrow V_{\ell}G \longrightarrow V_{\ell}M \longrightarrow V_{\ell}L \longrightarrow 
0, $$
we have $G = 0, L = 0$ hence $M=0$. So the crystalline realization of $M$, 
which is equal to $H^1_{\rm rig}(X/K)$ by \cite{ABV05} when $p \geq 3$, vanishes.  
\end{proof}

\subsection{Gau{\ss}-Manin convergent isocrystal}     \label{ss:GMiso}
In this subsection, we give some preliminaries on convergent $F$-isocrystals and 
then recall Ogus' definition of Gau{\ss}-Manin convergent $F$-isocrystal $R^if_*\sO_{Y/K}$ for a smooth proper morphism $f:Y\to X$ with $X$ smooth over a perfect field $k$ (see \cite[Section~3]{Ogu84}). 

\medskip

Recall that objects in the convergent site on $X$ over $K$ are  
enlargements, which are the diagrams of the form  
$(X\xleftarrow{} (T\otimes_W k)_{\rm red} \hookrightarrow T)$ 
over $W$, where $T$ is a $p$-adic formal scheme of finite type and flat over $W$. 
One defines  a convergent isocrystal on $X/K$ as a crystal of 
$\Q \otimes_{\Z} \sO_T$-modules on enlargements. Crystal means 
 a sheaf of coherent $\Q \otimes_{\Z} \sO_T$-modules  with transition functions being isomorphisms. 
We denote the category of convergent isocrystals on $X/K$ 
by ${\rm Conv}(X/K)$. 
The convergent site is functorial for $X/W$, and so we can define the
pullback functor $F^*: {\rm Conv}(X/K) \to {\rm Conv}(X/K)$ 
induced by the Frobenius $(F_X, F_W)$ on $X$ and $W$. 
Then we define the category $F\text{-}{\rm Conv}(X/K)$ of 
convergent $F$-isocrystals on $X/K$ as the category of pairs 
$(E, \Phi)$, where $E \in {\rm Conv}(X/K)$ and $\Phi$ is an isomorphism 
$F^*E \to E$. $\Phi$ is called a Frobenius structure on $E$. 

We define the $p$-adic convergent site on $X$ over $K$ 
as a variant of the convergent site: 
The objects in it are the $p$-adic enlargements, 
which are the diagrams of the form 
$(X\xleftarrow{} T\otimes_W k \hookrightarrow T)$ over $W$, where $T$ is as before. 
As in the case of convergent site, we can define the category of 
$p$-adic convergent isocrystals and that of 
$p$-adic convergent $F$-isocrystals, which we denote by 
$p{\rm Conv}(X/K)$, 
$F\text{-}p{\rm Conv}(X/K)$ respectively. 

Then we have the sequence of functors 
\ga{37.1}{ 
F\text{-}{\rm Conv}(X/K) \to F\text{-}{\rm Crys}(X/W)_{\Q} \to  
F\text{-}p{\rm Conv}(X/K), 
}
in which the first one is the inverse of the functor $M \mapsto M^{\rm an}$ in 
\cite[Thm.~2.4.2]{Ber96}. For any $p$-adic enlargement 
$T := (X\xleftarrow{h} T\otimes_W k \hookrightarrow T)$, 
$T_n := (T\otimes_W k \hookrightarrow T \otimes W/p^nW) \,(n \in \N)$ 
are objects in the crystalline site $(T \otimes k/W)_{\rm crys}$. 
Then, for any $E \in {\rm Crys}(X/W)$,  
$T \mapsto \varprojlim_n (h^*E)_{T_n}$ defines an object in 
$p{\rm Conv}(X/K)$ and this induces the second functor in \eqref{37.1}.
The functors in \eqref{37.1} are known to be equivalences 
by \cite[Thm.~2.4.2]{Ber96} and \cite[Prop.~2.18]{Ogu84}.

\medskip

We recall the definition of the Gau{\ss}-Manin convergent $F$-isocrystal 
$$R^if_*\sO_{Y/K} \in F\text{-}{\rm Conv}(X/K)$$
for a smooth proper morphism $f:Y\to X$ (see \cite[Section~3]{Ogu84}). 
It is defined as the unique object such that, for any $p$-adic enlargement 
$T := (X\xleftarrow{} T\otimes_W k \hookrightarrow T)$, 
the value $(R^if_*\sO_{Y/K})_T$ at $T$ of $R^if_*\sO_{Y/K}$  as 
an object in $F\text{-}p{\rm Conv}(X/K)$ is given by 
$\Q\otimes R^n(f_T)_{\rm crys *} \sO_{Y_T/T}$, where 
$(f_T)_{\rm crys}: (Y_T/T)_{\rm crys} \to T_{\rm Zar}$ 
is the morphism of topo\"i induced by 
the pull-back $f_T: Y_T \to T \otimes k$ 
of $f$ by $T\otimes k \to X$. 

\medskip 

Assume now that $X$ admits a closed embedding  
$\iota: X \hookrightarrow P$ into a $p$-adic smooth formal scheme $P$ over ${\rm Spf}\,W$ and give 
another description of convergent $F$-isocrystals and the Gau{\ss}-Manin convergent isocrystals. 
For $n = 0,1,2$, let $X \leftarrow X(n) \hookrightarrow \sZ(n)$ be the universal $p$-adic enlargement \cite[Prop.~2.3]{Ogu84}
of $X \hookrightarrow \underbrace{P \times_W \cdots \times_W P}_{n+1}$, and let 
$F\text{-}{\rm Str}(X \hookrightarrow P/W)$ be the category of triples 
$(E,\epsilon,\Phi)$ consisting of 
a coherent $\Q \otimes_{\Z} \sO_{\sZ(0)}$-module $E$, an isomorphism 
$\epsilon: p_2^*E \overset{\cong}{\to} p_1^*E$ ($p_i: \sZ(1) \to\sZ(0)$ are 
the projections) satisfying cocycle condition, and an isomorphism 
$\Phi: F^*(E,\epsilon) \overset{\cong}{\to} (E,\epsilon)$ 
(where $F$ is the pullback by Frobenius on $X(n) \hookrightarrow \sZ(n)$). 
Then we have an equivalence of categories [(2.11), Ogus]
\ga{37.2}{
F\text{-}p{\rm Conv}(X/K) \cong F\text{-}{\rm Str}(X \hookrightarrow P/W). }

Via the equivalences \eqref{37.1} and \eqref{37.2}, the Gau{\ss}-Manin 
convergent isocrystal $R^if_*\sO_{Y/K}$ is described as the triple  
$(\Q \otimes R^nf(0)_{{\rm crys} *}\sO_{Y(0)/\sZ(0)}, \epsilon, \Phi) \in F\text{-}{\rm Str}(X \hookrightarrow P/W)$, where 
$f(0)_{\rm crys}: (Y(0)/\sZ(0))_{\rm crys} \to \sZ(0)_{\rm Zar}$ 
is the morphism 
of topoi induced by 
the pull-back $f(0): Y(0) \to X(0)$ 
of $f$ by $X(0) \to X$ and 
$\epsilon, \Phi$ are defined by the functoriality of 
crystalline cohomology sheaves.

\medskip

Let us denote the category of overconvergent $F$-isocrystals on $X/K$ 
\cite[(2.3.7)]{Ber96} by $F\text{-}{\rm Conv}^{\dagger}(X/K)$, and 
the structure overconvergent isocrystal by $\sO_{X/K}^{\dagger}$ to 
distinguish it from $\sO_{X/K}$. 
There exists a natural resriction functor 
$F\text{-}{\rm Conv}^{\dagger}(X/K) \to F\text{-}{\rm Conv}(X/K)$ which 
is fully faithful (\cite{Ked04}). 
When $f: Y \to X$ is smooth projective,  
Lazda \cite[Cor.~5.4]{Laz15} proved that the convergent $F$-isocrystal 
$R^if_* \sO_{Y/K}$ lifts to an overconvergent $F$-isocrystal on $X$, 
which we denote by $R^if_* \sO_{Y/K}^{\dagger}$. 
When $X$ is proper, the categories $F\text{-}{\rm Conv}(X/K)$, 
$F\text{-}{\rm Conv}^{\dagger}(X/K)$ are the same 
and so one can equate $R^if_* \sO_{Y/K}^{\dagger}$ and $R^if_* \sO_{Y/K}$. 

\begin{remark}\label{rem:frobd}
In this subsection, the Frobenius structure on a(n) (over)convergent isocrystal 
is defined with respect to the pullback functor induced by 
the Frobenius $(F_X, F_W)$ on $X$ and $W$. For any $d \ge 1$,  
we can also define the Frobenius structure 
with respect to the pullback functor induced by $(F_X^d, F_W^d)$, 
and the Frobenius structure in the former case induces the one in the latter case. 
Such a Frobenius structure will appear in the next subsection. 
\end{remark} 

\subsection{ Case where the ground field $k$ is finite.}
In this subsection, we prove Theorem~\ref{thm2:main} 
in the case where $k$ is a finite field
and the statement in Remark 
\ref{rem:thm2}. 
In order to prove them, we may replace $k$ by a finite extension $k'$, and  $K$ by a finite possibly ramified extension of the field of fractions of the ring of Witt vectors over $k'$, 
such that the following hold:
\begin{enumerate}
\item $X$ has a $k$-rational point $x$. 
\item The eigenvalues of the action of Frobenius $F_x$ on 
$L_{{\ell}} := H^i_{\rm \acute{e}t}(Y \times_X \bar{x}, \Q_{\ell})$ belong to a 
number field $K_0$ contained in $K$. 
\item There exists a $K_0$-vector space $L$ with linear action $F$ and 
an inclusion of fields $K_0 \hookrightarrow \Q_{\ell}$ such that 
$L \otimes_{K_0} \Q_{\ell}$ is isomorphic to $L_{\ell}$ as vector spaces 
with an action. 
\end{enumerate}
 (Note that overconvergent isocrystals and rigid cohomologies are 
defined even when the base complete discrete valuation ring 
$O_K$ is ramified over $W(k)$ (\cite{Ber96}, \cite{LeS07}), and the rigid cohomology 
satisfies the base change property for finite extension of the base (\cite[Cor.~11.8.2]{ChT03}).) 

Until the end of this subsection, we consider the Frobenius structure 
with respect to the pullback induced by $(F_X^d, {\rm id})$, where 
$d = \log_p |k|$. (See Remark \ref{rem:frobd}.) 
For any closed point $y$ in $X$, one has the base change isomorphism 
\ga{39}{ y^* R^if_*\sO_{Y/K}^{\dagger} = 
y^* R^if_*\sO_{Y/K}=H^i_{\rm rig} (Y\times_X y/K)}
by \cite[Rmk.3.7.1]{Ogu84} and \cite[Prop.~1.8, Prop.~1.9]{Ber97}, and the set of Frobenius eigenvalues on it is 
the same as that on $\ell$-adic cohomology 
$H^i (Y\times_X \bar y, \Q_\ell)$ 
by \cite[Thm.1]{KM74}, \cite[Cor.~1.3]{CLS98}. 
The assumption $\pi_1^{\rm \acute{e}t}(X \otimes \overline k)=\{1\}$ implies that 
the action of Frobenius $F_y$ on $H^i (Y\times_X \bar y, \Q_\ell)$ is 
identified with the action of $F_x^{d_y}$ (where $d_y={\rm deg}(y/k)$) on $L_{\ell}$, 
hence the action of $F \otimes {\rm id}$ on $L \otimes_{K_0} \Q_{\ell}$. 

Let $\sE_0$ be the overconvergent $F$-isocrystal on $X$ defined by 
$((L \otimes_{K_0} K) \otimes_K \sO_{X/K}^{\dagger}, F \otimes {\rm id})$. 
Then, by construction, $R^if_*\sO_{Y/K}^{\dagger}$ and 
$\sE_0$ have the same eigenvalues of Frobenius action on any closed point of 
$X$. Then Abe's \v{C}ebotarev's density theorem  \cite[A.3]{Abe13} implies that the semi-simplification of $R^if_*\sO_{Y/K}^{\dagger}$ is the same as that of 
$\sE_0$. Hence the semi-simplification of $R^if_*\sO_{Y/K}^{\dagger}$ 
is trivial as overconvergent isocrystal on $X$. 
By Theorem~\ref{thm:van},  extensions of $\Q\otimes \sO^{\dagger}_{X/W}$ by itself 
are trivial when $X$ is proper or $p \geq 3$. This finishes the proof. 


\subsection{General case} 
In this subsection, we prove Theorem~\ref{thm2:main} by a
spreading out argument allowing $k$ to be a perfect field, but assuming $X$ to be proper. 
Let $f: Y \to X$ be as  in the statement of 
Theorem~\ref{thm2:main} and let $g: X \to {\rm Spec}\,k$ be the structure 
morphism. Also, let $X = \bigcup_{i \in I} X_i$ be an affine open covering of $X$ 
and take a closed embedding $X_i \to P_i$ into a smooth $p$-adic formal scheme $P_i$ 
over $W$ for each $i \in I$. We prove that the Gau{\ss}-Manin convergent isocrystal 
$\sE_{\rm conv} := R^if_*\sO_{Y/K} \in F\text{-}{\rm Conv}(X/K)$ is trivial as 
an object in ${\rm Conv}(X/K)$. 
Denote by $\sE_{\rm crys}, \sE_{p{\rm conv}}$ the image of $\sE_{\rm conv}$ 
in $F\text{-}{\rm Crys}(X/W)_{\Q}, F\text{-}p{\rm Conv}(X/K)$ via \eqref{37.1}.

We can find a connected affine scheme $T = {\rm Spec}\,A_1$ 
smooth of finite type over $\F_p$, a $p$-adic formal lift $\sT := {\rm Spf}\,A$ of $T$ 
which is smooth over ${\rm Spf}\,\Z_p$ and endowed with a lift of Frobenius,  
and proper smooth morphisms $Y_T \overset{f_T}{\to} X_T \overset{g_T}{\to} T$
which fit into the commutative diagram 
\begin{equation}\label{40} 
\xymatrix{
Y \ar[d]_g \ar[r] \ar@{}[rd]|\Box & Y_T 
\ar[d]_{f_T} \ar@{=}[r] & Y_T \ar[d]_{f_T} 
\\ X \ar[d]_g \ar[r]_{\alpha} \ar@{}[rd]|\Box & 
X_T \ar[d]_{g_T} \ar@{=}[r] & X_T \ar[d] \\ 
{\rm Spec}\,k \ar[d] \ar[r] \ar@{}[rd]|\Box 
& T \ar[d] \ar[r] \ar@{}[rd]|\Box & {\rm Spec}\,\F_p \ar[d] \\ 
{\rm Spf}\,W \ar[r] & \sT \ar[r] & {\rm Spf}\,\Z_p, 
}
\end{equation}
where the squares with symbol $\Box$ means cartesian squares. 
Also, we may assume the existence of an open covering 
$X_T = \bigcup_{i \in I} X_{i,T}$ which induces $X = \bigcup_{i \in I} X_i$ and 
closed immersions 
$X_{i,T} \hookrightarrow P_{i,T} \, (i \in I)$ into a $p$-adic formal scheme $P_{i,T}$ smooth 
over $\sT$ with $P_{i,T} \times_{\sT} {\rm Spf}\,W = P_i$. 

Let $\widetilde{\sE}_{T, {\rm conv}} := R^ng_{T,{\rm conv},*}\sO_{Y_T/\Q_p} 
 \in F\text{-}{\rm Conv}(X_T/\Q_p)$ be the Gau{\ss}-Manin convergent 
$F$-isocrystal defined by the right column of \eqref{40}, and let 
$\widetilde{\sE}^n_{T, {\rm crys}}$, 
$\widetilde{\sE}^n_{T, p{\rm conv}}$ 
be its images in the categories $F\text{-}{\rm Crys}(X_T/\Z_p)_{\Q}, 
F\text{-}p{\rm Conv}(X_T/\Q_p)$. Then the morphism $\alpha$ 
in \eqref{40} induces 
the pullback functor 
\ga{40.1}{F\text{-}{\rm Crys}(X_T/\Z_p)_{\Q} \to F\text{-}{\rm Crys}(X_T/\sT)_{\Q} 
\overset{\alpha^*}{\to} F\text{-}{\rm Crys}(X/\Z_p)_{\Q}.}  %
We denote this functor by $\widetilde{\alpha}^*$. Then 
we have the map
\ga{41}{ 
\widetilde{\alpha}^*\widetilde{\sE}_{T, {\rm crys}} \to \sE_{\rm crys}} 
defined by functoriality. 

To prove that the map \eqref{41} is an isomorphism, 
we may work locally on each $X_i$. So we may assume that 
$X_T$ admits a closed embedding $X_T \to P_T$ into a $p$-adic formal 
scheme $P_T$ smooth over $\sT$. Put $P := P_T \times_{\sT} W$. 
Then the functor \eqref{40.1} is identified with the composite 
\ga{41.1}{F\text{-}p{\rm Conv}(X_T/\Q_p) \cong 
F\text{-}{\rm Str}(X_T \hookrightarrow P_T/\Z_p)  
\overset{\alpha^*}{\to} 
F\text{-}{\rm Str}(X \hookrightarrow P/W) \cong F\text{-}p{\rm Conv}(X/\Q_p)}
via the second equivalence in \eqref{37.1}. 
It suffices to prove that the map 
\ga{41.2}{\widetilde{\alpha}^*\widetilde{\sE}_{T, p{\rm conv}} \to \sE_{p{\rm conv}}}
induced by \eqref{41} via the second equivalence in \eqref{37.1} is an 
isomorphism, and to see this, it suffices to prove that the base change map 
$$ \alpha_{\sZ(n)}^* (\Q \otimes R^ig(n)_{T,{\rm crys},*}
\sO_{Y_T(n)/\sZ_T(n)}) \to  
\Q \otimes R^ig(n)_{{\rm crys},*}\sO_{Y(n)/\sZ(n)}$$ 
induced by diagrams 
\begin{equation}\label{42} 
\xymatrix{
Y(n) \ar[d]_{g(n)} \ar[r] \ar@{}[rd]|\Box & Y_T(n) \ar[d]^{g(n)_T} \\ 
X(n) \ar[d] \ar[r]  & 
X_T(n) \ar[d] \\ 
\sZ(n) \ar[r]_{\alpha_{\sZ(n)}} & \sZ_T(n)}
\end{equation}
is an isomorphism, where $X_T \leftarrow X_T(n) \hookrightarrow \sZ_T(n)$ is the universal 
$p$-adic enlargement of $X_T \hookrightarrow 
\underbrace{P_{T} \times_{\Z_p} \cdots \times_{\Z_p} P_{T}}_{n+1}$ and 
$X \leftarrow X(n) \hookrightarrow \sZ(n)$ is the universal 
$p$-adic enlargement of $X \hookrightarrow 
\underbrace{P \times_{W} \cdots \times_{W} P}_{n+1}$. 
This follows from the base change theorem of crystalline cohomology 
$$ L\alpha_{\sZ(n)}^* Rg(n)_{T,{\rm crys},*}
\sO_{Y_T(n)/\sZ_T(n)} \overset{\cong}{\to}   
Rg(n)_{{\rm crys},*}\sO_{Y(n)/\sZ(n)}$$ 
(\cite[Thm.~7.8]{BO78}, \cite[Cor.~3.2]{Ogu84}) and the flatness of 
$\Q \otimes R^jg(n)_{T,{\rm crys},*}\sO_{Y_T(n)/\sZ_T(n)}$ for $j \geq 0$ 
\cite[Cor.~2.9]{Ogu84}.  
Hence the map \eqref{41} is an isomorphism.

Let $\sE_{T, {\rm crys}}$ be the image of $\widetilde{\sE}_{T, {\rm crys}}$ by 
$F\text{-}{\rm Crys}(X_T/\Z_p)_{\Q} \to F\text{-}{\rm Crys}(X_T/\sT)_{\Q}$. 
Then the map \eqref{41} can be rewritten as 
\begin{equation}\label{43} 
\alpha^*\sE_{T, {\rm crys}} \to \sE_{\rm crys}. 
\end{equation}
Hence the map \eqref{43} is also an isomorphism. 

Next, let $s = {\rm Spec}\,\F_q$ be a closed point of $T$ and put 
$\sS = {\rm Spf}\,W(\F_q) =: {\rm Spf}\,\Z_q, \Q_q := \Q \otimes \Z_q$. 
Then, by taking the fiber at $s$, we obtain the following diagram: 
\begin{equation}\label{44} 
\xymatrix{
Y \ar[d]_g \ar[r] \ar@{}[rd]|\Box & Y_T 
\ar[d]_{f_T} \ar@{}[rd]|\Box & Y_s \ar[l] \ar[d]_{f_s} 
\\ X \ar[d]_f \ar[r]_{\alpha} \ar@{}[rd]|\Box & 
X_T \ar[d]_{g_T} \ar@{}[rd]|\Box & X_s \ar[l]^{\beta} \ar[d]_{g_s} \\ 
{\rm Spec}\,k \ar[d] \ar[r] \ar@{}[rd]|\Box 
& T \ar[d] \ar@{}[rd]|\Box & s \ar[l] \ar[d] \\ 
{\rm Spf}\,W \ar[r] & \sT & \sS. \ar[l] 
}
\end{equation}
Let $\sE_{s, {\rm conv}} := R^ig_{s,{\rm conv},*}\sO_{Y_s/\Q_q} 
 \in F\text{-}{\rm Conv}(X_s/\Q_q)$ be the Gau{\ss}-Manin convergent 
$F$-isocrystal defined by the right column of \eqref{44}, and 
denote its image in $F\text{-}{\rm Crys}(X_s/\Z_q)_{\Q}$ by 
$\sE_{s, {\rm crys}}$. 
By the same method as above, one can prove 
the isomorphism 
\begin{equation}\label{45} 
\beta^*\sE^n_{T, {\rm crys}} \to \sE^n_{s,{\rm crys}}, 
\end{equation}
where $\beta^*$ is the pullback 
$F\text{-}{\rm Crys}(X_T/\sT)_{\Q} \to F\text{-}{\rm Crys}(X_s/\Z_q)_{\Q}$ 
by $\beta$. 

We consider the crystalline cohomology 
$H^j((X_T/A)_{\rm crys}, \sE_{T,{\rm crys}}) \, (j \geq 0)$. This is a finitely generated 
$\Q \otimes A$-module as we will prove below. 
Since $A_{pA}$ is a discrete valuation ring, we may assume that 
$H^j((X_T/A)_{\rm crys}, \sE_{T,{\rm crys}})$'s are free 
$\Q \otimes A$-modules, by shrinking $\sT = {\rm Spf}\,A$. 
Then we have the base change isomorphisms 
\begin{align}
\Q_q \otimes_{\Q \otimes A} H^j((X_T/A)_{\rm crys}, \sE_{T,{\rm crys}})
\cong H^j((X_s/\Z_q)_{\rm crys}, \sE^n_{s,{\rm crys}}), \label{46} \\ 
K \otimes_{\Q \otimes A} H^i((X_T/A)_{\rm crys}, \sE_{T,{\rm crys}})
\cong H^j((X/W)_{\rm crys}, \sE_{\rm crys}), \label{47} 
\end{align}
which we will prove below. 

Let $r$ be the rank of $\sE_{T, {\rm crys}}$.
By the triviality of Gau{\ss}-Manin convergent isocrystal 
in finite field case, $\sE_{s,{\rm crys}}$ is trivial 
as a convergent isocrystal. Hence, by \eqref{46}, 
$H^0((X_T/A)_{\rm crys}, \sE_{T,{\rm crys}}) \allowbreak \cong (\Q \otimes A)^r$ 
and by \eqref{47}, 
$H^0((X/W)_{\rm crys}, \sE_{\rm crys}) \cong K^r$. 
Hence $\sE_{\rm crys}$ is trivial (and so $\sE_{\rm conv}$ is also trivial), 
as required. So the proof of the theorem is finished modulo 
the finiteness and the base change property we used above. 

Finally, we prove the finiteness and the base change property. 
In the following, let ${\rm Spec}\,A_1$ be an affine regular scheme of 
characteristic $p>0$, let 
${\rm Spf}\,A$ be a $p$-adic formal scheme flat over ${\rm Spf}\,\Z_p$ such that 
$A/pA = A_1$. Let $X \to {\rm Spec}\,A_1$ be a projective smooth morphism,  
and let $\sE$ be an isocrystal on $(X/A)_{\rm crys}$ which is  
locally free as an isocrystal. 
Moreover, assume that we are given the following cartesian diagram 
such that ${\rm Spec}\,A'_1$ is also regular. 
\begin{equation*}
\xymatrix{
X \ar[d] \ar@{}[rd]|\Box & 
X' \ar[l]_{\alpha} \ar[d] \\ 
{\rm Spec}\,A_1 \ar[d] \ar@{}[rd]|\Box 
& {\rm Spec}\,A'_1 \ar[l] \ar[d] \\ 
{\rm Spf}\,A & {\rm Spf}\,A'. \ar[l] 
}
\end{equation*}

Then we have the following: 

\begin{prop}
In  the situation above, we have the following: \\ 
(1) \, $R\Gamma((X/A)_{\rm crys}, \sE)$ is a perfect complex of 
$(\Q \otimes A)$-modules. \\ 
(2) \, $A' \otimes^L_A R\Gamma((X/A)_{\rm crys}, \sE) \to  
R\Gamma((X'/A')_{\rm crys}, \alpha^*\sE)$ is a quasi-isomorphism. 
\end{prop}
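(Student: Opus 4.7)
The strategy is to reduce from the isocrystal $\sE$ to a $p$-torsion free lattice, apply the Berthelot--Ogus crystalline finiteness and base change theorem at each finite level $A_n := A/p^n A$, and then assemble the compatible system of perfect complexes mod $p^n$ into a single perfect complex over $A$ via a $p$-adic limit argument. Tensoring with $\Q$ at the end gives both (1) and (2). To begin, choose a $p$-torsion free lattice $E \in {\rm Crys}(X/A)$ of $\sE$ with $\Q \otimes E \cong \sE$, as recalled in \cite[Section~1]{ES15}. Since $(-) \otimes \Q$ commutes with $R\Gamma$ on the crystalline site and with pullback along $\alpha$, and since local freeness of $\sE$ as an isocrystal guarantees $\alpha^*\sE \simeq L\alpha^*\sE$, both (1) and (2) reduce to the corresponding assertions for $E$ and $\alpha^*E$ over $A$ and $A'$, up to inverting $p$.

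Set $E_n := E/p^n E$ and $A'_n := A'/p^n A'$. Because $f: X \to {\rm Spec}\,A_1$ is projective and smooth, and since $\sE$ is locally free so the lattice $E$ may be chosen locally free on a suitable affine cover of $X$, the Berthelot--Ogus finiteness and base change theorem \cite[Thm.~7.8]{BO78}, \cite[Cor.~3.2]{Ogu84} yields three facts: $R\Gamma((X/A_n)_{\rm crys}, E_n)$ is a perfect complex of $A_n$-modules; the base change map $L\alpha^* R\Gamma((X/A_n)_{\rm crys}, E_n) \xrightarrow{\simeq} R\Gamma((X'/A'_n)_{\rm crys}, \alpha^* E_n)$ is a quasi-isomorphism; and reduction from $A_n$ to $A_{n-1}$ yields compatible quasi-isomorphisms $R\Gamma((X/A_n)_{\rm crys}, E_n) \otimes^L_{A_n} A_{n-1} \xrightarrow{\simeq} R\Gamma((X/A_{n-1})_{\rm crys}, E_{n-1})$. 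Because $A$ is noetherian and $p$-adically complete, the standard lifting theorem for compatible systems of perfect complexes in the complete noetherian setting produces a bounded complex $C^\bullet$ of finite projective $A$-modules with $C^\bullet \otimes^L_A A_n$ representing the $n$-th level. Identifying $R\Gamma((X/A)_{\rm crys}, E) \simeq R\varprojlim_n R\Gamma((X/A_n)_{\rm crys}, E_n) \simeq C^\bullet$ and inverting $p$ gives (1). The same construction on the $A'$-side, combined with $R\varprojlim$ of the finite-level base change quasi-isomorphisms, gives (2).

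The principal technical obstacle lies at the finite level: an arbitrary lattice of a locally free isocrystal need not be globally locally free, so the Berthelot--Ogus torsion-level input cannot be applied to $E$ as such. This is handled by covering $X$ by affine opens on which $\sE$ admits a locally free lattice, patching these lattices via a \v{C}ech--Alexander construction as used in Section~\ref{s:bg}, and noting that only rational statements are ultimately required. Once the finite-level perfectness and base change are secured, the $p$-adic limit is routine thanks to the noetherianity and $p$-adic completeness of $A$.
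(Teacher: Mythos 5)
Your argument rests on a step that is precisely the open problem the paper is forced to work around: you assert that, at least on a suitable affine cover, the locally free isocrystal $\sE$ admits a \emph{locally free} lattice $E$, and you then feed $E_n$ into the Berthelot--Ogus finiteness and base change theorem. Whether a locally free isocrystal admits a locally free lattice --- even Zariski-locally, even on an affine with a smooth lift --- is unknown (the paper's introduction and acknowledgements flag this explicitly, and the Remark immediately following the Proposition states that this is exactly why the base change theorem of \cite{BO78} ``is not enough for us''). A $p$-torsion free finitely presented module $M$ over $\sO_D$ with $\Q\otimes M$ projective over $\Q\otimes\sO_D$ need not contain a projective sublattice, and locally chosen lattices will not glue to a crystal since the transition maps must be isomorphisms integrally. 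Without local freeness of $E_n$, neither the perfectness of $R\Gamma((X/A_n)_{\rm crys},E_n)$ over $A_n$ nor the finite-level base change quasi-isomorphism follows from \cite[Thm.~7.8]{BO78}, so the two pillars of your finite-level argument collapse; the closing paragraph about ``patching via a \v{C}ech--Alexander construction'' does not repair this.

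The paper's actual proof avoids the issue as follows. For (1), it only uses that $R\Gamma(D_{(\bullet)n}, E_{(\bullet)n}\otimes\bar\Omega^\bullet_{D_{(\bullet)n}})$ is represented by a bounded complex of $A_n$-modules flat over $\Z/p^n\Z$ (not over $A_n$), which gives compatibility under reduction $A_n\to A_{n-1}$ via base change along $\Z/p^n\Z\to\Z/p^{n-1}\Z$; perfectness is injected only at level $1$, where $R\Gamma((X/A_1)_{\rm crys},E_1)=R\Gamma(X,E_X\otimes\Omega^\bullet_X)$ is perfect because $X\to{\rm Spec}\,A_1$ is projective and $A_1$ is regular (finite Tor-dimension, no local freeness of $E_X$ needed); then \cite[B.10]{BO78} lifts perfectness to $A$. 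For (2), no integral base change is claimed at all: one forms the cone $C_n$ of the comparison map at level $n$, reduces Zariski-locally by cohomological descent to an affine lift ${\rm Spf}\,B$, and shows directly that $\Q\otimes R\varprojlim_n C_n=0$, using only that $\Q\otimes E^{\sX}$ is projective over $\Q\otimes B$ --- that is, local freeness of the \emph{isocrystal}, which is the hypothesis actually available. You should rework your argument along these lines, isolating exactly where rationalization is allowed to substitute for the missing integral local freeness.
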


\begin{remark}
Although $\sE$ is locally free as an isocrystal, it is not clear 
at all that there exists a lattice $E$ of $\sE$ with $E$ locally free 
as a crystal. This is the reason that 
the base change theorem in \cite{BO78} is not enough for us. 
\end{remark} 

\begin{proof}
Let $X_{(\bullet)} \to X$ be a simplicial scheme augmented to $X$ defined 
as the \v{C}ech hypercovering associated to an open covering $X= \bigcup_{i \in I}X_i$ 
which admits a closed embedding $X_{(\bullet)} \to Y_{(\bullet)}$ into 
a simplicial $p$-adic formal scheme $Y_{(\bullet)}$ smooth over $A$, 
and let $D_{(\bullet)}$ be the PD-envelope of it. 
For $n \in \N$, put 
$A_n := A/p^nA, D_{(\bullet) n} := D_{(\bullet)} \otimes_{\Z_p} \Z/p^n\Z$.  
Also, let $E$ be a $p$-torsion free crystal on $X/A$ with $\sE = \Q \otimes E$, 
let $E_n$ be 
the restriction of $E$ to $(X/A_n)_{\rm crys}$ and $E_{(\bullet) n}$ the value of $E$ 
at $D_{(\bullet) n}$. 
Then 
$R\Gamma((X/A_n)_{\rm crys}, E_n) = R\Gamma(D_{(\bullet) n}, E_{(\bullet) n} \otimes \bar \Omega^{\bullet}_{D_{(\bullet) n}})$ 
and this is quasi-isomorphic to a bounded complex of 
$A_n$-modules flat over $\Z/p^n\Z$ which is compatible with respect to $n$. 
Hence 
\begin{align*}
R\Gamma((X/A_n)_{\rm crys}, E_n) \otimes^L_{A_n} A_{n-1} 
& = R\Gamma((X/A_n)_{\rm crys}, E_n) \otimes^L_{\Z/p^n\Z} \Z/p^{n-1}\Z \\ 
& = R\Gamma((X/A_{n-1})_{\rm crys}, E_{n-1}). \nonumber 
\end{align*}
Also, $R\Gamma((X/A_1)_{\rm crys}, E_1) = 
R\Gamma(X, E_X \otimes \Omega^{\bullet}_{X})$ is 
a perfect complex of $A_1$-modules, because $A_1$ is regular. 
Hence, by \cite[B.10]{BO78}, $R\Gamma((X/A)_{\rm crys}, E) 
= R\varprojlim_n R\Gamma((X/A_n)_{\rm crys}, E_n)$ 
is a perfect complex of $A$-modules, and so 
$R\Gamma((X/A)_{\rm crys}, \sE) := \Q \otimes 
R\Gamma((X/A)_{\rm crys}, E)$ is a perfect complex of 
$(\Q \otimes A)$-modules. 
This proves (1). \par 
Next we prove (2). the left hand side is equal to 
\begin{equation*}
\Q \otimes (A' \otimes^L_A R\Gamma((X/A)_{\rm crys}, E)) 
= \Q \otimes (R\varprojlim_n (A'_n \otimes^L_{A_n} 
R\Gamma((X/A_n)_{\rm crys}, E_n))
\end{equation*} 
because $R\Gamma((X/A)_{\rm crys}, E)$ is perfect, and 
the right hand side is equal to 
$$ \Q \otimes R\varprojlim_n R\Gamma 
((X/A'_n)_{\rm crys}, \alpha^*E_n). $$
Let us define $C_n$ by 
$$ C_n := {\rm Cone}(A'_n \otimes^L_{A_n} 
R\Gamma((X/A_n)_{\rm crys}, E_n) \to R\Gamma 
((X/A'_n)_{\rm crys}, \alpha^*E_n)). $$ 
Then it suffices to prove that $\Q \otimes R\varprojlim_n C_n = 0$. 
To prove this, we may work Zariski locally on $X$ (because of 
cohomological descent). So we can reduce to the case 
that $X \to {\rm Spec}\,A_1$ is liftable to an affine smooth morphism 
$\sX = {\rm Spf}\,B \to {\rm Spf}\,A$. (But we lose the projectivity of $X$.) 
Let $\sX_n := \sX \otimes_{\Z_p} \Z/p^n\Z, \sX'_n := \sX \otimes_{A_n} A'_n$ 
and let 
$E_n^{\sX}$ be the value of $E_n$ at $\sX_n$. Then 
$$ C_n = {\rm Cone}(A'_n \otimes^L_{A_n} 
\Gamma(\sX_n, E_n^{\sX} \otimes \Omega^{\bullet}_{\sX_n/A_n}) 
\to \Gamma(\sX'_n, \alpha^*E_n^{\sX} \otimes \Omega^{\bullet}_{\sX'_n/A'_n})). $$
For a finitely generated $p$-torsion free $B$-module $M$, put 
$M_n := M \otimes_A A_n$ and 
$$ C_n(M) :=  {\rm Cone}(A'_n \otimes^L_{A_n} M_n 
\to A'_n \otimes_{A_n} M_n). $$ 
Then it suffices to prove that $\Q \otimes R\varprojlim_n C_n(M) = 0$ 
when $\Q \otimes M$ is a projective $\Q \otimes B$-module. 
If we take a resolution $N^{\bullet} \to M$ of $M$ by finitely generated 
free $B$-modules, $N^{\bullet}_n \to M_n$ is also a resolution 
because $M$ is $p$-torsion free. Then 
\begin{align} 
& \Q \otimes R\varprojlim_n C_n(M) \\ 
= \, & 
\Q \otimes R\varprojlim_n{\rm Cone}(A'_n \otimes_{A_n} N^{\bullet}_n 
\to A'_n \otimes_{A_n} M_n) \nonumber \\ 
= \, & 
{\rm Cone}((\Q \otimes A') \otimes_{\Q \otimes A} (\Q \otimes N^{\bullet}) 
\to (\Q \otimes A') \otimes_{\Q \otimes A} (\Q \otimes M)) 
= 0. \nonumber 
\end{align}
(The last equality follows from the projectivity of 
$\Q \otimes N^{\bullet}$, 
$\Q \otimes M$.) 
So the proof of (2) is finished. 
\end{proof} 

\section{A very weak form of a Lefschetz theorem for isocrystals} \label{s:lefschetz}
Let $X$ be a smooth projective variety defined over a perfect field $k$ of characteristic $p>0$. 
One expects that given an irreducible isocrystal $\sE\in 
{\rm Crys}(X/W)_{\Q}$, there is an ample divisor $Y\subset X$ defined over the same field such that $\sE|_Y$ is irreducible in ${\rm Crys}(Y/W)_{\Q}$. 
 One could also expect the existence of such $Y$ which is independent of $\sE$.
One could also weaken all those variants by requesting the irreducibility only for convergent isocrystals $\sE\in {\rm Conv}(X/K)$.  Note that any subisocrystal of a convergent isocrystal is a convergent isocrystal, 
thus this version for convergent isocrystals  is just the restriction to ${\rm Conv}(X/K)$ of version for isocrystals.

We prove here a very weak form of this expectation. 
\begin{thm} \label{thm:lefschetz}
Let $X$ be a smooth projective variety defined over a perfect field $k$ of characteristic $p>0$ which lifts to $W_2(k)$. For any ample line bundle $\sH$, there is an integer $n_0>0$ such that for any $n\ge n_0$, any isocrystal $\sE\in {\rm Crys}(X/W)_{\Q}$ of rank $\le p$ which admits a lattice $E$ with $E_X$ $\mu$-stable in ${\rm Crys}(X/k)$ has the property 
that $E_Y \in {\rm Crys}(Y/k)$ is $\mu$-stable as well for any smooth divisor $Y$ in $|\sH^n|$ lifting to $W_2(k)$ on which the restriction $E_Y$ of the crystal $E_X$ to $Y$ is torsion free. Moreover, the torsion freeness condition for $E_Y$ is automatically satisfied when $\sE$ admits a locally free lattice (which is not necessarily equal to $E$). 

\end{thm}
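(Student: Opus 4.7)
The plan is to argue by contraposition: given a destabilizing sub-crystal $F_Y \subset E_Y$ in ${\rm Crys}(Y/k)$, I construct a destabilizing sub-crystal of $E_X$ in ${\rm Crys}(X/k)$, contradicting the hypothesis on $E_X$. The two tools are the canonical extension of sub-crystals through the PD-thickenings of $Y$ in $X$, afforded by the connection $\nabla$ on $E_X$ coming from the lattice $E \in {\rm Crys}(X/W)$, and an algebraization of the resulting formal subsheaf along the ample divisor $Y$.

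\emph{Formal extension along $Y$.} Assume $F_Y \subsetneq E_Y$ is a proper sub-crystal in ${\rm Crys}(Y/k)$ of rank $r_F$ with $\mu_{\sH|_Y}(F_Y) \ge \mu_{\sH|_Y}(E_Y)$. The $W_2(k)$-liftings of $X$ and $Y$ make each infinitesimal PD-neighborhood $Y_m$ of $Y$ in $X$ a well-defined PD-thickening, on which the crystal $E$ takes a value compatible with restriction. Formal parallel transport in the conormal direction via $\nabla$ lifts $F_Y$ uniquely to sub-$\sO_{Y_m}$-modules $F_m \subset E_X|_{Y_m}$, and the integrability of $\nabla$ ensures that each $F_m$ is preserved by the induced connection on $Y_m$. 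The compatible family $\{F_m\}_m$ assembles into a formal coherent subsheaf $\widehat{F}$ of $E_X$ along $Y$, itself a formal sub-crystal.

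\emph{Algebraization and slope contradiction.} For $n \ge n_0$ sufficiently large, $\widehat{F}$ algebraizes to a coherent subsheaf $F_X \subset E_X$ on $X$, via Grothendieck's formal existence theorem applied to the projective Quot scheme $\mathrm{Quot}_{r_F}(E_X) \to X$, together with the Lefschetz-type conditions on $(X,Y)$ forced by the high ampleness of $\sH^n$. The algebraized $F_X$ is preserved by $\nabla$, since it is so on the schematically dense formal neighborhood of $Y$; hence $F_X$ is a sub-crystal of $E_X$ in ${\rm Crys}(X/k)$. Since $F_X|_Y = F_Y$ and $Y \in |\sH^n|$, the slope identity $\mu_{\sH|_Y}((-)|_Y) = n \cdot \mu_{\sH}(-)$ converts $\mu(F_Y) \ge \mu(E_Y)$ into $\mu_{\sH}(F_X) \ge \mu_{\sH}(E_X)$, contradicting the $\mu$-stability of $E_X$.

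\emph{Torsion-freeness clause and main obstacle.} For the last sentence of the theorem: if $\sE$ admits a locally free lattice $E^{\mathrm{lf}}$, then $E^{\mathrm{lf}}_X$ is locally free, so $E^{\mathrm{lf}}_Y$ is locally free and hence torsion-free. Commensurability of $E$ and $E^{\mathrm{lf}}$ inside the isocrystal, propagated through the crystal structure and restricted to $Y$, produces an injection of $E_Y$ into a torsion-free $\sO_Y$-module built from $E^{\mathrm{lf}}_Y$, forcing $E_Y$ to be torsion-free. The principal obstacle of the whole argument is ensuring the threshold $n_0$ can be taken \emph{uniformly} in $\sE$: the Serre/Lefschetz vanishings driving algebraization depend a priori on the Chern classes of $F_Y$ and $E_Y/F_Y$, which vary with the isocrystal. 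The rank bound $\rank(\sE) \le p$ limits the combinatorial possibilities for $r_F$, and the slope pinning $\mu(F_Y) \ge \mu(E_Y)$ together with Bogomolov--Langer type inequalities for $\mu$-semistable sheaves bounds their higher Chern classes; these confine the relevant destabilizing sub-crystals to a bounded family, which is what makes a single $n_0 = n_0(X,\sH)$ possible across all admissible $\sE$.
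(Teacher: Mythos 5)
Your strategy (extend a destabilizing sub-crystal of $E_Y$ formally along $Y$, then algebraize) is not the paper's, and it has a gap that is fatal in characteristic $p$: the crystal structure does \emph{not} provide a formal subsheaf of $E_X$ along $Y$ to all orders. The value of a crystal on the PD-envelope $D_Y(X)$ is $E_X\otimes_{\sO_X}\sO_{D_Y(X)}$, and in characteristic $p$ the structural map $\sO_X\to\sO_{D_Y(X)}$ kills $f^p$ for every $f$ in the ideal of $Y$ (since $f^p=p!\,f^{[p]}=0$). So ``PD-parallel transport'' in the conormal direction only controls the $(p-1)$-st infinitesimal neighborhood of $Y$ in $X$, not the formal completion; there is no compatible family $\{F_m\}_{m}$ on all $Y_m$ and hence nothing to feed into Grothendieck's existence theorem. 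Even if one had such a formal subsheaf, algebraizing a formal section of a Quot scheme along an ample divisor requires effective Lefschetz conditions ($\dim X\ge 3$ at least, plus positivity input) that you do not verify, and your claimed uniformity of $n_0$ over all $\sE$ via Bogomolov-type bounds is asserted rather than proved. A telling symptom is that your argument never uses the hypotheses that $X$ and $Y$ lift to $W_2(k)$ and uses ${\rm rank}\le p$ only incidentally; these are exactly the hypotheses needed for the Ogus--Vologodsky correspondence, which is the engine of the actual proof. The paper first gets $\mu(E_X)=\mu(E_Y)=0$ from Theorem~\ref{thm1:main}, then transports $\mu$-stability to the associated Higgs sheaf by \cite[Cor.~5.10]{Lan14} via \cite[Thm.~2.8]{OV07}, applies Langer's restriction theorem \cite[Thm.~10]{Lan15} to get stability of the restricted Higgs sheaf, and transports back.

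Your treatment of the torsion-freeness clause also fails: commensurability of the lattices $E$ and $E^{\rm lf}$ gives maps whose composites are $p^N$, which carry no information after reduction to $Y$ (a $k$-scheme, where $p^N=0$), so you do not get an injection of $E_Y$ into a torsion-free sheaf. The paper instead uses the vanishing of the crystalline Chern classes (Theorem~\ref{thm1:main} plus independence of the lattice) to see that $E_X$ has the normalized Hilbert polynomial of $\sO_X$, whence $E_X$ itself is locally free by \cite[Thm.~11]{Lan15} and $E_Y$ is automatically torsion free.
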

To link Theorem~\ref{thm:lefschetz} to the expectation, one needs the following theorem.
\begin{theorem} \label{thm:lattice}
Let $X$ be a smooth projective variety defined over a perfect field $k$ of characteristic $p>0$.  Let $\sE\in {\rm Crys}(X/W)_{\Q}$ which admits a lattice $E$ such that $E_X$ is $\mu$-stable in ${\rm Crys}(X/k)$. Then $\sE$ is irreducible.

\end{theorem}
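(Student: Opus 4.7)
The plan is to proceed by contradiction. Suppose $\sE$ is reducible and choose a proper nonzero sub-isocrystal $\sF \subsetneq \sE$. Setting $F := E \cap \sF$, with the intersection taken inside $\sE = \Q\otimes_\Z E$, I would first verify that $F$ is a saturated sub-crystal of $E$: the transition isomorphisms of $\sE$ preserve both $E$ and $\sF$ and hence their intersection $F$; $F$ is $p$-torsion free because $E$ is, and the quotient $E/F$ is $p$-torsion free since it embeds into $\sE/\sF$. Restriction to the trivial thickening $X\hookrightarrow X$ over $W$ then yields an exact sequence in ${\rm Crys}(X/k)$,
\[
0 \longrightarrow F_X \longrightarrow E_X \longrightarrow G_X \longrightarrow 0,
\]
with $F_X$ a proper nonzero sub-crystal of $E_X$. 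The $\mu$-stability hypothesis on $E_X$ then forces the strict inequality $\mu(F_X) < \mu(E_X)$.

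The core of the argument is to show that in fact $\mu(F_X) = \mu(E_X) = 0$, which will contradict the strict inequality just obtained. To this end I would consider the determinant line crystals $\det E$, $\det F$, $\det G$, which are rank-one lattices in the line-isocrystals $\det\sE$, $\det\sF$, $\det(\sE/\sF)$ respectively; multiplicativity of the determinant along the short exact sequence of torsion-free sheaves gives $\det E_X \cong \det F_X \otimes \det G_X$. Since each of these determinant crystals is of rank one and hence locally free, Theorem~\ref{thm1:main} applies and gives the vanishing
\[
c_1^{\rm crys}(\det E_X) = c_1^{\rm crys}(\det F_X) = c_1^{\rm crys}(\det G_X) = 0
\]
in $H^2_{\rm crys}(X/W)$. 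Via the compatibility of the crystalline cycle class map with intersection theory, cupping each with $c_1^{\rm crys}(H)^{n-1}$ for the polarization $H$ defining $\mu$ yields vanishing of the numerical degrees $\deg_H(\det E_X) = \deg_H(\det F_X) = 0$, hence $\mu(E_X) = \mu(F_X) = 0$, contradicting $\mu(F_X) < \mu(E_X)$.

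The main obstacle I anticipate is defining the determinant rank-one crystal $\det F$ cleanly when $F$ is not necessarily locally free: one needs either to pass to the reflexive hull, or to restrict to a dense open $U \subset X$ of complement codimension at least two on which $E$, $F$ and $G$ are simultaneously locally free, form the determinant crystals there, apply Theorem~\ref{thm1:main} on $U$, and then argue that the numerical degree of a line bundle on smooth projective $X$ is determined by its restriction to such an open. With this technical point settled, the contradiction goes through and $\sE$ admits no proper nonzero sub-isocrystal.
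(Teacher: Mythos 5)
Your proof is correct and rests on the same mechanism as the paper's: Theorem~\ref{thm1:main} forces the relevant slopes to vanish, and $\mu$-stability of $E_X$ then forbids any proper nonzero subobject. The one genuine difference is the choice of sub-lattice: the paper takes an irreducible sub-isocrystal $\sE'$, invokes the existence of a lattice $E'$ of $\sE'$ with $E'_X$ $\mu$-semistable (citing \cite[Prop.~4.1]{ES15}), normalizes the inclusion $E'(nX)\subset E$, and runs the chain $0=\mu(E'_X)\le\mu({\rm Im}\,\iota_X)<\mu(E_X)=0$; you instead saturate, taking $F=E\cap\sF$, which hands you a genuine subcrystal $F_X\subset E_X$ and lets stability bite directly, with no need for a semistable lattice of the sub-isocrystal. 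That is a small but real simplification. On the technical obstacle you flag: it dissolves once you observe that $F_X$ is torsion-free because it is a subsheaf of $E_X$ (which is torsion-free, being $\mu$-stable), so its non-locally-free locus has codimension $\ge 2$; the paper's device is then to restrict to a smooth complete intersection curve $C$ in a high power of $|\sH|$ avoiding that locus, where the $p$-torsion-free crystals $E$ and $F$ become locally free crystals (local freeness of the value on $C$ plus $p$-torsion-freeness upgrades to local freeness of the crystal), and Theorem~\ref{thm1:main} on the projective curve $C$ gives $\deg E_C=\deg F_C=0$ directly. This is cleaner than forming determinant crystals on a big open $U$, since $H^2_{\rm crys}(U/W)$ for non-proper $U$ carries no degree map and you would in any case have to pass to a complete curve to extract the numerical degree.
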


\begin{proof}
Let $\sE'\subset \sE$ be an irreducible subisocrystal and $E'$ be a lattice of $\sE'$ 
which is $\mu$-semistable in ${\rm Crys}(X/k)$. (See \cite[Prop.~4.1]{ES15} for the existence of such lattice.) 
Then there  is an integer $n$ such that $E'(nX)\subset E$,  and such that this inclusion is maximal for this property.  
By replacing $E'$ by $E'(nX)$, we can assume that the inclusion $\iota: E'\to E$ in ${\rm Crys}(X/W)$ induces 
a nonzero map $\iota_X: E'_X\to E_X$ in ${\rm Crys}(X/k)$.  
The locus $\Sigma\subset X$ on which $E'_X, E_X$ are possibly not locally free 
have codimension $\ge 2$. 
Let $\sH$ be the ample line bundle defining the $\mu$-(semi)stability, and $C$ be a smooth complete intersection curve in a high power of the linear system $|\sH|$, which is disjoint of $\Sigma$. Then the restrictions $E'_C, E_C$ of $E'_X, E_X$ to $C$, 
which is the same as the values of $E', E$ to $C$,  are locally free. 
By Theorem~\ref{thm1:main}, the degree of $E'_C, E_C$ are $0$. Thus the slopes $\mu(E'_X), \mu(E_X)$ are $0$. 
Then $0 = \mu(E'_X) \le \mu({\rm Im}\,\iota_X) < \mu(E_X) = 0$
unless $\iota_X$ is surjective. 
Hence $\iota_X$ is surjective and so is $\iota$. Hence $\sE' = \sE$. 
\end{proof}

\begin{remark} \label{rmk:irred}
The condition in Theorem~\ref{thm:lattice} implies irreducibility, but is not equivalent to it. Let $X$ be a smooth projective curve of genus $\ge 2$, with lift $X_W$ to $W=W(k)$,  let 
$a, b\in H^0(X_W,\Omega^1_{X_W/W})$ be linearly independent differential forms. Define the connection on  $E=\sO_{X_W} e_1\oplus \sO_{X_W} e_2$
by the matrix $A=\begin{pmatrix}  0 & p a\\ p b & 0 \end{pmatrix}$, which is the value of $\nabla$ on the two basis vectors $e_i$. Then $E_X$  is a trivial connection.  As a consequence, $E$ is locally nilpotent, thus is a crystal. It is irreducible as an isocrystal, as any sub line bundle of degree $0$ of $\sO_{X_K} e_1 \oplus \sO_{X_K} e_2$ is isomorphic to $\sO_{X_K}$, spanned by $\lambda_1e_1+\lambda_2e_2, \ (\lambda_1, \lambda_2)\neq (0,0)$ where $\lambda_i\in K$ and there is no form $\omega \in H^0(X_{K}, \omega_{X_K})$ such that $\lambda_2 pa=\lambda_1 \omega, \ \lambda_1 pb =\lambda_2 \omega$. 

We prove now  that there is no lattice $E'$ of $\sE$ such that 
$E'_X$ is $\mu$-stable in ${\rm Crys}(X/k)$. 
The argument is essentially the same as in \cite[Thm.~5.2]{Lan14}.
Assuming there is  such a lattice 
$E'$, after replacing $E'$ by $E'(nX)$ for a suitable integer $n$, 
there is an inclusion $\iota: E' \to E$ inducing the identity on $\sE$ such 
that the induced map $\iota_X: E'_X \to E_X$ is non-zero. 
By Theorem~\ref{thm1:main}, $\mu(E_X)=\mu(E'_X)=0$.
If $\iota_X$ is not surjective, $\mu(E'_X) < \mu({\rm Im} \iota_X) \leq \mu(E_X)$, 
a contradiction. So $\iota_X$, and thus $\iota$  are surjective.
Then $E'=E$ and thus $E_X=E'_X$, which is  impossible as $E_X$ is not stable. 

\end{remark}

\begin{remark} \label{rmk:irred2}
Irreducible convergent  isocrystals  do not necessarily satisfy 
the condition in Theorem~\ref{thm:lattice}. We now construct an example. 

Let $k$ be an algebraically closed field of characteristic $p>0$ and let 
$X$ be a smooth projective curve such that the $p$-rank of its Jacobian is $r \ge 2$. 
Let $K, K_0$ be the fraction field of $W(k)$, $W(\F_{p^2})$ respectively. 
Let $\rho: \pi_1(X) \to GL_2(\Z_p)$ be a continuous representation 
such that  $\rho_{K_0}: \pi_1(X) \to GL_2(\Z_p) \to GL_2(K_0)$ is irreducible and 
that $\rho$ modulo $p$ is trivial. Because the pro-$p$ completion  $\pi_1(X)^p$ of  the \'etale fundamental group $\pi_1(X)$ is 
isomorphic to a free pro-$p$ group $\langle \gamma_1, ..., \gamma_r \rangle$ of rank $r$, 
the composition 
$$  \pi_1(X) \to \pi_1(X)^p \cong \langle \gamma_1, ..., \gamma_r \rangle \to GL_2(\Z_p) $$ 
defined by $$\gamma_1 \mapsto \begin{pmatrix}  1 & p \\ 0 & 1 \end{pmatrix}, \,\,\,\,
\gamma_2 \mapsto \begin{pmatrix}  1 & 0 \\ p & 1 \end{pmatrix}, \,\,\,\,
\gamma_i \mapsto  \begin{pmatrix}  1 & 0 \\ 0 & 1 \end{pmatrix} \,\,(i \geq 3)$$
satisfies the conditions required for $\rho$. 
Let $(\sE,\Phi), \ \Phi:  F^*\sE \xrightarrow{\cong} \sE$, be the unit-root convergent $F$-isocrystal associated to 
$\rho_{\Q_p}: \pi_1(X) \to GL_2(\Z_p) \to GL_2(\Q_p)$ by 
\cite[Thm.~2.1]{Cre87} and let $E$ be the lattice of $\sE$ 
asociated to $\rho$ by the construction given in \cite[Prop.~2.3, Rmk.~2.3.2]{Cre87}.
Namely, if we denote the finite Galois covering associated to the kernel of 
$\rho$ modulo $p^n$ by $f_n: Y_n \to X$ and denote its Galois group by $G_n$, 
$E$ is defined to be the inverse limit of $(f_{n \,{\rm crys}*}\sO_{Y_n/W_n}^2)^{G_n}
\,(n \in \N)$, where the action of $G_n$ is induced by its action on $Y_n$ and $\rho$ modulo $p^n$. Then $E_X = (f_{1 \,{\rm crys}*}\sO_{Y_1/k}^2)^{G_1}$ is trivial because $\rho$ is trivial modulo $p$.

We prove that $\sE$ is irreducible as a convergent isocrystal. 
Assume on the contrary that $\sE$ has a convergent subisocrystal $\sE'$ of rank $1$. 
Then $0 \subsetneq \sE' \subsetneq \sE$ is a Jordan-H\"older sequence of $\sE$, 
and so is $0 \subsetneq F^*\sE' \subsetneq F^*\sE \cong \sE$. 
Hence $\{\sE', \sE/\sE'\} = \{ F^*\sE', F^*\sE/F^*\sE'\}$ and so 
we have an isomorphism $(F^*)^2\sE' \xrightarrow{\cong} \sE'$, which we denote by $\alpha$. 
By replacing $\alpha$ by $p^n \alpha$ for a suitable integer $n$, we may assume that 
the slope of $(\sE',\alpha)$ is $0$.
We denote the composite  $\Phi \circ F^*\Phi: (F^*)^2\sE \xrightarrow{\cong} \sE$ by 
$\Phi_2$. Let $\iota_1: \sE' \to \sE$ be the inclusion map and let $\iota_2: \sE' \to \sE$ be 
the composite $\sE' \xrightarrow{\alpha^{-1}} (F^*)^2\sE' \to (F^*)^2\sE \xrightarrow{\Phi_2} 
\sE$. 
Both $\iota_1, \iota_2$ are maps of convergent isocrystals. 
If $\iota_2$ is equal to $\iota_1$ up to scalar $a \in K \setminus \{0\}$, 
$(\sE',a \alpha)$ is a convergent $F$-subisocrystal of $(\sE,\Phi_2)$. Because 
 $(\sE,\Phi_2)$, corresponding to $\rho_{K_0}: \pi_1(X) \to GL_2(\Z_p) \to GL_2(K_0)$, 
has pure slope $0$, $(\sE',a \alpha)$ is necessarily of slope $0$ and so it induces 
a nontrivial subrepresentation of $\rho_{K_0}$, which contradicts the irreducibility 
of $\rho_{K_0}$. 
Hence $\iota_2$ is not a scalar multiple of $\iota_1$ and so 
$\iota := \iota_1 \oplus \iota_2$ defines an isomorphism $\sE' \oplus \sE' \xrightarrow{\cong} \sE$
of convergent isocrystals.
If we identify $\sE' \oplus \sE'$ and $\sE$ via $\iota$, the isomorphism 
$\Phi_2: (F^*)^2\sE' \oplus (F^*)^2\sE' \to \sE' \oplus \sE'$ 
is written in the form $f \circ (\alpha \oplus \alpha)$, where 
$f \in {\rm End}(\sE' \oplus \sE') \cong M(2 \times 2, K)$. 
Because $(\sE, \Phi_2)$, $(\sE',\alpha)$ have pure slope $0$, 
the $F$-isocrystal $(K^2,f)$ on $K$ has pure slope $0$. Hence 
there exists a nonzero $(x_1, x_2) \in K^2$ with $f(F^2(x_1),F^2(x_2)) = (x_1, x_2)$. 
Then we see that the image of $x_1 \oplus x_2: \sE' \to \sE' \oplus \sE' \cong \sE$ 
is stable under $\Phi_2$ and thus defines a convergent $F$-subisocrystal of $(\sE,\Phi_2)$, 
which again leads to a contradiction. Hence $\sE$ is irreducible as convergent isocrystal, 
as required.

Finally, the same argument as in Remark~\ref{rmk:irred} shows that there is 
no lattice $E'$ of $\sE$ such that 
$E'_X$ is $\mu$-stable in ${\rm Crys}(X/k)$.

\end{remark}

\begin{proof}[Proof of Theorem~\ref{thm:lefschetz}]
As $E_X$ is torsion free, we find a smooth complete intersection curve  $C$  in $ |\sH^n|$ such that $E_C$ as a coherent sheaf is locally free. 
Thus by Theorem~\ref{thm1:main}, $E_C$ has degree $0$. Thus $\mu(E_X)=0$, and for all smooth divisors 
$Y\in |\sH^n|$ with $E_Y$ torsion free, $\mu(E_Y)=0$. 
By \cite[Cor.~5.10]{Lan14} the Higgs sheaf $(V, \theta)$  associated  to $E_X$ by the Ogus-Vologodsky correspondence  \cite[Thm.~2.8]{OV07} is $\mu$-stable  of degree $0$
as well.  By \cite[Thm.~10]{Lan15}, $(V,\theta)|_Y$ is $\mu$-stable of degree $0$, thus again by  \cite[Cor.~5.10]{Lan14}, $E_Y$ is $\mu$-stable of degree $0$. When $\sE$ admits a locally free lattice, Theorem~\ref{thm1:main} and the independence 
of crystalline Chern classes with respect to lattices, the normalized Hilbert polynomial of $E_X$ is the same as 
that of $\sO_X$. Thus $E_X$ is locally free by \cite[Thm.~11]{Lan15} and so $E_Y$ is automatically torsion free. 
\end{proof}

\end{document}